\documentclass{article}
\usepackage{graphicx} % Required for inserting images

\usepackage{amsmath, appendix, ulem}
\usepackage{amsthm}
\usepackage[nobysame]{amsrefs}
\usepackage{amssymb, color}
\usepackage[margin=1.3in]{geometry}
\usepackage{mathrsfs}
\usepackage{graphicx}
\usepackage{float}
\usepackage{epsf}
\usepackage[colorlinks=true]{hyperref}
\usepackage{enumitem}
\usepackage{titletoc}
\usepackage[linesnumbered, ruled]{algorithm2e}

\usepackage{subcaption}

\usepackage{subeqnarray}
\usepackage{cases}
\graphicspath{{../Figures/}}

\numberwithin{equation}{section}

\newtheorem{lem}{Lemma}[section]
\newtheorem{thm}{Theorem}[section]
\newtheorem{proposition}[thm]{Proposition}

\newtheorem{rmk}{Remark}[section]
\newtheorem{definition}[thm]{Definition}

\newcommand{\nn}{\nonumber}
\newcommand{\R}{{\mathbb R}}

\newcommand{\barc}{\overline{c}}
\newcommand{\N}{{\mathbb N}}

\renewcommand{\tilde}{\widetilde}

\renewcommand{\bar}{\overline}

\newcommand{\bx}{\boldsymbol{x}}
\newcommand{\by}{{\boldsymbol{y}}}
\newcommand{\bX}{{\boldsymbol{X}}}

\newcommand{\bOM}{\overline{\boldsymbol{M}}}
\newcommand{\bM}{\boldsymbol{M}}

\newcommand{\mc}[1]{\mathcal{#1}}

\newcommand{\EE}{\mathbb{E}}
\newcommand{\RR}{\mathbb{R}}

\newcommand{\PP}{\mathbb{P}}

\newcommand{\OX}{\overline{X}}

\newcommand{\bxy}{\bar x_m(\bM_t^{-m})}

\usepackage[mathlines]{lineno}
%\linenumbers

\newcommand{\TE}{\mathcal{E}}

\DeclareMathOperator\diag{diag}

\title{A consensus-based algorithm for non-convex\\ multiplayer games}

\author{Enis Chenchene\thanks{Department of Mathematics and
	Scientific Computing, University of Graz, Heinrichstrasse 36, 8010 Graz, Austria, email: enis.chenchene@uni-graz.at, hui.huang@uni-graz.at} \and Hui Huang\footnotemark[1] \and Jinniao Qiu\thanks{Department of Mathematics and Statistics, University of Calgary, 2500 University Dr NW, Calgary, AB T2N 1N4, Canada, email: jinniao.qiu@ucalgary.ca}}

\date{\today}

\begin{document}

\maketitle

\begin{abstract}
In this paper, we present a novel consensus-based zeroth-order algorithm tailored for non-convex multiplayer games. The proposed method leverages a metaheuristic approach using concepts from swarm intelligence to reliably identify global Nash equilibria. We utilize a group of interacting particles, each agreeing on a specific consensus point, asymptotically converging to the corresponding optimal strategy. This paradigm permits a passage to the mean-field limit, allowing us to establish convergence guarantees under appropriate assumptions regarding initialization and objective functions. Finally, we conduct a series of numerical experiments to unveil the dependency of the proposed method on its parameters and apply it to solve a nonlinear Cournot oligopoly game involving multiple goods.
\end{abstract}
{\small {\bf Keywords:} Non-convex, multiplayer games, Nash equilibrium, swarm optimization, Laplace's principle.}
\section{Introduction}
 
Multiplayer games \cite{narahari2014game}, ranging from strategic board games to complex economic systems, have always fascinated researchers due to their intricate dynamics and strategic interactions among multiple players. Understanding and analyzing the outcomes of these games is crucial in various fields such as economics \cite{king2012understanding}, social sciences \cite{gokhale2014evolutionary}, and computer science \cite{fan2021fault}. In recent years, significant progress has also been made in developing advanced learning approaches within the field of artificial intelligence towards multiplayer scenarios. One notable example is the extension of adversarial learning, which was originally applied to settings with a single generator and discriminator, to accommodate multiple agents \cites{song2018multi, zhao2020improving, li2017triple}.  This process can also be observed in reinforcement learning, where multiplayer game theory has been integrated to enhance learning algorithms \cites{busoniu2008comprehensive, lanctot2017unified, dai2018sbeed}.

 In this paper, we focus on exploring a common class of non-convex games involving multiple players, specifically $M\geq 3$ players. Each player, denoted by $m \in [M] := \{1, \ldots, M\}$, aims to minimize their own cost function $\TE_m(x_m;\bx_{-m}): \mathbb{R}^{Md} \rightarrow \mathbb{R}$. The cost function is influenced by two factors: the player's own decision, represented by $x_m \in \mathbb{R}^d$, and the decisions made by all other players, denoted as $\bx_{-m}=(x_1,\dots,x_{m-1},x_m,\dots,x_M) \in \mathbb{R}^{(M-1)d}$. This setting  has garnered significant attention in machine learning applications. For instance, in the context of sensor localization \cites{ke2017distributed, yang2018df}, the decision variable $x_m$ corresponds to a sensor node, and the cost function $\TE_m$ is instantiated as the Euclidean norm. Similarly, in the domain of robust neural network training \cites{nouiehed2019solving, deng2021local}, $x_m$ represents the model parameter, and $\TE_m$ is the cross-entropy function. Furthermore, the considered setting has the potential to inspire solutions for resource allocation problems in unmanned vehicles \cite{yang2019energy} and secure transmission \cite{ruby2015centralized}. In these contexts, the decision variable $x_m$ represents the allocation of transmit resources, and the cost function $\TE_m$ captures the associated transmission cost. 
 
 With the formulation presented above, it is both natural and crucial, from the perspectives of both game theory and machine learning, to pursue the identification of the global Nash Equilibrium (NE) \cite{nash1950equilibrium}, which represents a widely recognized concept of optimality in game theory, wherein no player can improve their outcome by unilaterally altering their strategy while keeping the strategies of others unchanged. We specialize this concept in the following definition:
\begin{definition}\label{nash}
	Point $\bx^*:=(x_1^*,\dots, x_M^*)$ is a NE of cost functions $\{\TE_1,\dots, \TE_M\}$ if
	\begin{equation}
		x_m^* \in \arg\min_{x_m\in \RR^d}\TE_m(x_m;\bx^*_{-m})\,, \quad \text{for all} \ m \in [M]\,.
	\end{equation}
\end{definition}
\noindent
 
The search for global NE in non-convex settings remains an open problem \cites{maciel2003global,liu2021variance}. This challenge arises not only due to the absence of powerful tools compared to those available in convex scenarios but also due to the diversity of non-convex structures, which may require specific methodologies for resolution. While efficient techniques have been developed within convex conditions that have led to significant advancements in multiplayer game models \cites{yi2019operator, chen2021distributed}, these approaches may prove insufficient when confronted with non-convexity. In such cases, they may become trapped in local NE or approximate solutions while following pseudo-gradients, failing to converge to a global NE. Moreover, while inspiring breakthroughs have been made in solving non-convex two-player min-max games in different contexts, such as Polyak--Łojasiewicz cases \cites{nouiehed2019solving, fiez2021global} or concave cases \cites{lin2020gradient, rafique2022weakly}, these advancements may not readily apply to multiplayer settings. This is because the global stationary conditions in multiplayer games are intricately coupled and cannot be independently addressed by each player. Consequently, novel effective methods are required for attaining them, which is exactly the main contribution of this article

In this paper we introduce a novel zero-order consensus-based approach for finding global NE points in multiplayer games. This method draws inspiration from the consensus-based optimization (CBO) framework initially introduced in \cites{carrillo2018analytical,PTTM}. CBO belongs to the family of global optimization methodologies, which leverages systems of interacting particles to achieve consensus around global minimizers of the cost functions. As part of the broader class of metaheuristics \cites{Blum:2003:MCO:937503.937505,Gendreau:2010:HM:1941310}, CBO orchestrates interactions between local improvement procedures and global strategies, utilizing both deterministic and stochastic processes. This interplay ultimately results in an efficient and robust procedure for exploring the solution space of the cost functions.
More importantly, the CBO approach possesses an inherent advantage of being gradient-free. This characteristic makes it particularly desirable when dealing with cost functions that lack smoothness or when computing their derivatives is computationally expensive. 

 Motivated by diverse applications, researchers have extended and adapted the original CBO model to encompass various settings. These extensions include incorporating memory effects or gradient information \cites{riedl2022leveraging,totzeck2020consensus,cipriani2022zero}, integrating momentum \cite{chen2022consensus}, and employing jump-diffusion processes \cite{kalise2023consensus}. Additionally, CBO has been extended to address global optimization on compact manifolds \cites{fornasier2020consensus,ha2022stochastic}, handle general constraints \cites{borghi2023constrained,carrillo2023consensus},  cost functions with multiple minimizers \cite{bungert2022polarized}, multi-objective problems \cites{borghi2022consensus,borghi2022adaptive}, and sampling from distributions \cite{carrillo2022consensus}. Furthermore, CBO has also been applied to tackle high-dimensional machine learning problems \cites{carrillo2021consensus,fornasier2021consensus}, saddle point problems \cite{huang2022consensus}, asset allocation problems \cite{bae2022constrained}, and more recently the clustered federated learning \cite{carrillo2023fedcbo}. It has been demonstrated that CBO exhibits behavior similar to stochastic gradient descent (SGD) \cite{riedl2023gradient}.
 Specifically, our CBO dynamic for $M$-player games is a collection of $M\cdot N$ particles $\{(\OX_t^{1,i},\dots, \OX_t^{M,i})\}_{i\in [N]}\in \R^{Md}$ satisfying the following Stochastic Differential Equations (SDEs)
\begin{equation}\label{particle}
		d\OX_t^{m,i}=-\lambda(\OX_t^{m,i}-X_\alpha(\bar{\rho}_t^{m}))dt+\sigma D(\OX_t^{m,i}-X_\alpha(\bar{\rho}_t^{m}))dB_t^{m,i}\,,\quad i\in[N]\,, \ m\in[M]\,,
\end{equation}
where $\lambda,\sigma>0$ are drift and diffusion parameters, $\{B_t^{m,i}\}_{m\in [M],i\in [N]}\in \R^d$ are standard independent Brownian motions, and $D(X):=\diag(X_1,\dots,X_d)$ (anisotropic), or $D(X):= | X|$ (isotropic), for all $X\in \R^d$. However, in the subsequent analysis, we will specifically focus on the anisotropic case.
The system is complemented with identically and independently distributed (i.i.d.) initial data $\{(\OX_0^{1,i},\dots, \OX_0^{M,i})\}_{i\in [N]}$ with respect to the common law $\bar{\rho}_0$.
The first term on the right hand-side of \eqref{particle} is a deterministic drift that pulls the particles towards a current consensus point, computed as a convex combination of particles locations as
\begin{equation}\label{XaN}
X_\alpha(\bar{\rho}_t^{m}):=\frac{1}{\|\omega_{\alpha}^{\mc{E}_m}(\cdot;\bOM_t^{-m})\|_{L^1(\bar{\rho}_t^{m})}} \int_{\RR^{d}}x_m\omega_{\alpha}^{\mc{E}_m}(x_m;\bOM_t^{-m})\bar{\rho}_t^{m}(dx_m)\,,
\end{equation}
where $\bar{\rho}_t^{m}(dx_m):=\frac{1}{N}\sum_{i=1}^N\delta_{\OX_t^{m,i}}dx_m$ is the empirical measure of the particle system at time $t\geq 0$, $\omega_{\alpha}^{\mc{E}_m}$ is a weight function defined as
\begin{equation}\label{eq:weight_function}
\omega_{\alpha}^{\mc{E}_m}(x;\boldsymbol{y}):=\exp\left(-\alpha \TE_m(x;\boldsymbol{y})\right)\,, \quad \text{for all} \ x \in \R^d\,, \ \boldsymbol{y}\in \R^{(M-1)d}\,,
\end{equation}
and $\bOM_t$ is a random vector given by
\begin{equation}
	\bOM_t:=\bigg(\frac{\OX_t^{1,1}+\dots +\OX_t^{1,N}}{N}, \dots, \frac{\OX_t^{M,1}+\dots +\OX_t^{M,N}}{N}\bigg)\,.
\end{equation}
The introduction of the second stochastic term in \eqref{particle} is intended to promote exploration of the energy landscape of the cost function. When the consensus is achieved, meaning $\OX_t^{m,i}=X_\alpha(\bar{\rho}_t^{m})$, both the drift and diffusion terms vanish. The choice of the weight function \eqref{eq:weight_function} comes from  the  well-known Laplace's principle \cites{miller2006applied,Dembo2010}, which states that for any probability measure $\mu\in\mc{P}( \RR^d )$, there holds for any fixed $\by\in\R^{(M-1)d}$,
\begin{equation}\label{lap_princ}
	\lim\limits_{\alpha\to\infty}\left(-\frac{1}{\alpha}\log\left(\int_{\RR^{(M-1)d} }\omega_\alpha^{\TE_m}(x_m;\by)\mu(d x_m)\right)\right)=\inf\limits_{x_m \in \rm{supp }(\mu)} \TE_m(x_m;\by)\,.
\end{equation}

The theoretical convergence analysis of a CBO method can be conducted using two different approaches. The first approach involves investigating the microscopic particle level directly, as demonstrated in \cites{ha2021convergence, ha2020convergence}, by analyzing the particle system \eqref{particle}. The second approach, which the present paper adopts, is to consider the macroscopic level and focus on its corresponding mean-field equation in the large particle limit. This mean-field approach has been successfully utilized in previous works \cites{fornasier2022anisotropic, fornasier2022convergence, fornasier2021consensus1, huang2023global}.
Indeed, as the number of particles $N\to\infty$, the mean-field limit \cites{bolley2011stochastic,huang2020mean,carrillo2019propagation,sznitman1991topics,jabin2017mean} result would imply that our CBO particle dynamic \eqref{particle} well approximates solutions of the following  mean-field  kinetic Mckean--Vlasov type equations
\begin{equation}\label{MVeq}
dX_t^m=-\lambda(X_t^m-X_\alpha(\rho_t^m))dt+\sigma D(X_t^m-X_\alpha(\rho_t^m))dB_t^m \,,\quad m\in[M]\,,
\end{equation}
where the consensus point is defined similarly to \eqref{XaN} as 
\begin{equation}\label{Xa}
X_\alpha({\rho}_t^{m}):=\frac{1}{\|\omega_{\alpha}^{\mc{E}_m}(\cdot;\bM_t^{-m})\|_{L^1(\rho_t^{m})}} \int_{\RR^{d}}x_m\omega_{\alpha}^{\mc{E}_m}(x_m;\bM_t^{-m}){\rho}_t^{m}(dx_m)\,,
\end{equation}
where $\rho_t^m$ is the distribution of $X_t^m$, and $\bM_t$ is the vector given by
\begin{equation}
	\bM_t:=\left(\EE[X_t^1], \dots, \EE[X_t^M]\right).
\end{equation}

A direct application of It\^{o}'s formula yields that the law $\rho_t$ of $\boldsymbol{X}_t:=(X_t^1, \dots, X_t^M)$ for $t\geq 0$ is a weak solution to the following  nonlinear Vlasov--Fokker--Plank equation
\begin{align}\label{meanPDE}
\partial_{t} \rho_t&=\sum_{m=1}^{M}\bigg(\lambda\nabla_{x_m} \cdot\left(\left(x_m-{X}_{\alpha}(\rho_t^m)\right) \rho_t\right)+\frac{\sigma^{2}}{2} \sum_{k=1}^d\partial^2_{(x_m)_k(x_m)_k}\left((x_m-{X}_{\alpha}(\rho_t^m))_k^{2} \rho_t\right)\bigg)\,,
\end{align}
with the initial data $\rho_0$ being the law of $\boldsymbol{X}_0$. The current paper centers on investigating the convergence  of the proposed CBO variant in the context of finding global NE in multiplayer games. While the rigorous mean-field approximation is an interesting avenue for further exploration, it falls outside the scope of the current work and is best pursued in future research endeavors. The well-posedness results regarding the CBO particle system \eqref{particle} and its mean-field dynamics \eqref{MVeq} have also been omitted from this work. However, it is worth noting that these results closely resemble the well-posedness theorems presented in \cite[Theorem 3, Theorem 6]{huang2022consensus}, where a CBO dynamic is introduced for two player zero-sum games.  Moreover, in the following we shall assume the solution to \eqref{MVeq} has the regularity $\sup_{t\in[0,T]}\EE[|X_t^m|^4]<\infty$ for all $m\in [M]$ and any time horizon $T>0$, which can be guaranteed by the assumption on the initial data $\EE[|X_0^m|^4]<\infty$ for all $m\in [M]$.

The main objective of this work is to establish the convergence of the dynamics $\boldsymbol{X}_t$ in \eqref{MVeq} to the global NE point $(x_1^*,\dots,x_M^*)$ as $t$ approaches infinity. Inspired by \cite{fornasier2021consensus1}, we introduce the variance functions as follows:
\begin{equation}\label{eq:defi_V_m}
	 V^m(t) =\EE[|X_t^m-x_m^*|^2]\,, \quad \text{for each} \ m\in[M]\,, \quad \text{and} \quad V(t) := \sum_{m=1}^M V^m(t)\,, \quad \text{for} \ t>0\,.
\end{equation}
By analyzing the decay behavior of the (cumulative) variance function $V(t)$ we  establish the convergence of the CBO dynamics to the global NE point. Specifically, we demonstrate that the function $V(t)$ decays exponentially, with a decay rate controllable through the parameters of the CBO method. This also implies the convergence of the mean-field PDE \eqref{meanPDE} to a Dirac delta centering at the global NE with respect to the 2-Wasserstein distance, i.e.
\begin{equation*}
	W_2(\rho_t,\delta_{(x_1^*,\dots,x_M^*)})\to 0\,,\quad \mbox{as } \alpha, \ t\to \infty\,.
\end{equation*}

The rest of the paper is organized as follows. In Section \ref{sec2}, we commence by outlining the assumptions governing the cost functions $\{\TE_m\}_{m\in [M]}$ (Assumptions \ref{ass:1}--\ref{ass:4}) and deriving an estimate for the variance function \eqref{eq:defi_V_m} (Lemma \ref{lemV}). Subsequently, we establish a quantitative estimate of the Laplace principle (Proposition \ref{propX}) together with a result demonstrating that the probability mass around the target NE point remains bounded away from zero at any time $0<t<\infty$ (Proposition \ref{propositive}). By leveraging these outcomes, we finally demonstrate that the variance $V(t)$ exhibits exponential decay, achieving any desired level of accuracy $\varepsilon>0$ (Theorem \ref{thm:global_convergence}). In Section \ref{sec3}, we conduct a series of numerical experiments to evaluate the performance of our proposed CBO method and analyze its dependency on parameters. Additionally, we apply the method to solve a nonlinear non-convex Cournot's oligopoly game involving multiple goods.

\section{Global convergence}\label{sec2}

In this section, we present our main result about the global convergence in mean-field law for cost functions satisfying the following conditions.

	\paragraph{Assumptions.} In this paper,  the cost functions $\{\TE_m\}_{m\in [M]}\subset \mc{C}(\R^{Md})$ satisfy:
	\begin{enumerate}[label=(A\arabic*)]
		\item \label{ass:1} There exists a unique global NE point $(x_1^*,\dots,x_M^*)$ and $|\TE_m|\leq \bar c$ for all $m\in M$.
		\item \label{ass:2} For any $m\in [M]$ and $\by\in \RR^{(M-1)d}$, there exists a unique $\bar x_m(\by)$ such that
		\begin{equation}
			\bar x_m(\by)=\arg\min_{x_m\in\R^{d}}\TE_m(x_m;\by)\,.
		\end{equation}
		Especially, it holds that $\bar x_m(\bx_{-m}^*)=x_m^*$. Moreover, there exist $\bar{c}_1,\barc_2>0$ independent of $m$ such that
		\begin{align}
		&|\bar x_m(\by_1)-\bar x_m(\by_2)|\leq \bar{c}_1|\by_1-\by_2|\,,\quad  \mbox{for any } m\in[M]\mbox{ and }\ \by_1,\by_2\in \RR^{(M-1)d} \label{eq:lipschitz_minizing} \\
		&	|\bar x_m(\by)|\leq \bar{c}_2 \,,\quad  \mbox{for any } m\in[M]\mbox{ and } \ \by\in \RR^{(M-1)d}\label{boundedbar}
		\,.
	\end{align}
		\item \label{ass:3}  For each  $m\in [M]$ and $\by\in \RR^{(M-1)d}$, there exist $\TE_\infty,\eta,\nu, R_0>0$ independent of $m$  such that
		\begin{equation}
			\eta|x_m-\bar x_m(\by)|\leq |\TE_m(x_m;\by)-\TE_m(\bar x_m(\by);\by)|^\nu\,, \quad \text{for all} \ x_m\in B_{R_0}(\bar x_m(\by))\,,
		\end{equation}
		\begin{equation}
			\TE_\infty<\TE_m(x_m;\by)-\TE_m(\bar x_m(\by);\by)\,, \quad \text{for all} \ x_m\in B_{R_0}(\bar x_m(\by))^c \,.
		\end{equation} 
     \item \label{ass:4}  For each $q>0$ there exists some $r\in (0,R_0]$ such that 
		\begin{equation}
			\sup_{m\in[M]} \sup_{\by\in\RR^{(M-1)d}}\sup_{\ x_m\in B_{r}(\bar x_m(\by)) }|\TE_m(x_m;\by)-\TE_m(\bar x_m(\by);\by)|\leq q \,.
		\end{equation}
	\end{enumerate}
	Note that $B_r(x)$, here and in the sequel, represents the ball centered in $x\in \R^d$ with radius $r>0$ with respect to the $\ell^{\infty}$ norm, i.e., $B_r(x):=\{x' \in \R^d : \max_{k \in [d]}|(x')^k-x^k| \leq r \}$.
	\begin{rmk}
	The assumptions \ref{ass:1}, \ref{ass:3}, and \ref{ass:4} above are standard in the analysis of the CBO methods. In fact, these also appear, e.g., in \cite[Definition 9]{fornasier2021consensus1}. Assumption \ref{ass:2} is tailored to our multiplayer setting, accounting for the fact that each player's strategy is contingent upon the decisions made by the other players. Specifically, later in the analysis, we show that Assumption \ref{ass:2}, coupled with a quantitative Laplace principle, is indeed crucial for obtaining a bound for $|x_m^*-X_\alpha(\rho_t^m)|$, with $m \in [M]$. It is worth noting that our numerical experiments suggest that the assumption in \eqref{eq:lipschitz_minizing} could potentially be relaxed or even dropped (cf.~Section \ref{sec:results_and_discussions}), which we leave for future work. Moreover, the assumptions do not require any convexity/concavity or differentiability conditions. For instance, consider the nonconvex, nonconcave, and nondifferentiable functions $\mathcal E_m(x_m;\bx_{-m})= \Big|\tanh\Big(x_m-  (4+|\tilde \bx_m|)^{-1} \sin\Big(\frac{\tilde \bx_{-m}}{2}\Big)\Big)\Big|\cdot (2+\sin(x_m))^{-1}$ with $\tilde \bx_{-m}=M^{-1} \sum_{j\neq m}x_j$ for $m\in [M],\, d=1,\, (x_m,\bx_{-m})\in\mathbb R\times \mathbb R^{M-1}$, which lead to a unique global  NE point with $x_1^*=x_2^*=\dots = x_M^{*}=0$ and have all the assumption \ref{ass:1}--\ref{ass:4} satisfied.
	\end{rmk}

We now present a differential inequality for the variance function $ V^m(t)$ according to \eqref{eq:defi_V_m} , whose proof is postponed to the Appendix.
\begin{lem}\label{lemV}
	Let $\{\TE_m\}_{m \in [M]}$ satisfy Assumptions \ref{ass:1}--\ref{ass:4}, and we assume that $\lambda, \sigma$ satisfy
$2\lambda-\sigma^2>0$. Then for any $m\in[M]$ the variance function $ V^m(t)$ satisfies
\begin{align}
	\frac{d  V^m(t)}{dt}\leq -(2\lambda-\sigma^2) V^m(t)+2(\lambda+\sigma^2)V^m(t)^{\frac{1}{2}}|x_m^*-X_\alpha(\rho_t^m)|+\sigma^2 |x_m^*-X_\alpha(\rho_t^m)|^2\,.
\end{align}
\end{lem}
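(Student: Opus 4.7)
The plan is to apply It\^{o}'s formula to $|X_t^m - x_m^*|^2$ along the mean-field SDE \eqref{MVeq}, take expectation to kill the martingale part, and then bound the resulting drift and diffusion terms by writing $X_t^m - X_\alpha(\rho_t^m) = (X_t^m - x_m^*) + (x_m^* - X_\alpha(\rho_t^m))$ so as to isolate the deterministic offset between the current consensus point and the target NE coordinate.

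First I would spell out the It\^{o} differential. Since $D(X_t^m - X_\alpha(\rho_t^m))$ is diagonal with $k$-th entry $(X_t^m)_k - X_\alpha(\rho_t^m)_k$, squaring and taking the trace yields $|X_t^m - X_\alpha(\rho_t^m)|^2$, so
\begin{align*}
d|X_t^m - x_m^*|^2 &= -2\lambda (X_t^m - x_m^*) \cdot (X_t^m - X_\alpha(\rho_t^m)) \, dt + \sigma^2 |X_t^m - X_\alpha(\rho_t^m)|^2 \, dt \\
&\quad + 2\sigma (X_t^m - x_m^*) \cdot D(X_t^m - X_\alpha(\rho_t^m)) \, dB_t^m.
\end{align*}
The assumed fourth-moment regularity $\sup_{t\in[0,T]} \EE[|X_t^m|^4] < \infty$ makes the stochastic integral a true martingale, so taking expectations yields
\begin{equation*}
\frac{dV^m(t)}{dt} = -2\lambda \, \EE\bigl[(X_t^m - x_m^*) \cdot (X_t^m - X_\alpha(\rho_t^m))\bigr] + \sigma^2 \, \EE\bigl[|X_t^m - X_\alpha(\rho_t^m)|^2\bigr].
\end{equation*}

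The key observation is that $X_\alpha(\rho_t^m)$ is deterministic at each time $t$ (it depends only on the distribution $\rho_t^m$ and the deterministic vector $\bM_t$). Substituting the decomposition, the drift term splits as $-2\lambda V^m(t) - 2\lambda (x_m^* - X_\alpha(\rho_t^m)) \cdot \EE[X_t^m - x_m^*]$; Jensen's inequality gives $|\EE[X_t^m - x_m^*]| \leq V^m(t)^{1/2}$, bounding the cross-term by $2\lambda V^m(t)^{1/2} |x_m^* - X_\alpha(\rho_t^m)|$. Expanding the squared diffusion term in the same way and applying Jensen once more gives
\begin{equation*}
\sigma^2 \EE\bigl[|X_t^m - X_\alpha(\rho_t^m)|^2\bigr] \leq \sigma^2 V^m(t) + 2\sigma^2 V^m(t)^{1/2} |x_m^* - X_\alpha(\rho_t^m)| + \sigma^2 |x_m^* - X_\alpha(\rho_t^m)|^2.
\end{equation*}

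Adding the two contributions produces exactly the claimed bound. The assumption $2\lambda - \sigma^2 > 0$ is not actually used in the derivation itself; it merely ensures that the leading $V^m(t)$ coefficient is dissipative, which is what will allow this inequality to be combined with the quantitative Laplace principle (Proposition \ref{propX}) later to yield exponential decay. The calculation is almost entirely routine — the only mild bookkeeping point is to extract the deterministic offset $x_m^* - X_\alpha(\rho_t^m)$ \emph{before} applying Jensen, so that the cross-term picks up $V^m(t)^{1/2}$ rather than the cruder $V^m(t)$, which is precisely what keeps the estimate sharp enough to be useful for proving global convergence.
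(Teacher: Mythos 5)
Your proposal is correct and follows essentially the same route as the paper: It\^{o}'s formula on $|X_t^m-x_m^*|^2$, taking expectations to kill the martingale term via the fourth-moment bound, and inserting the decomposition $X_t^m-X_\alpha(\rho_t^m)=(X_t^m-x_m^*)+(x_m^*-X_\alpha(\rho_t^m))$ before estimating the cross terms with Cauchy--Schwarz/Jensen. The only cosmetic difference is that you bound the two cross terms separately while the paper first combines them into a single $2(\sigma^2-\lambda)\EE[(X_t^m-x_m^*)\cdot(x_m^*-X_\alpha(\rho_t^m))]$ term; both yield the same coefficient $2(\lambda+\sigma^2)$.
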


The main idea underpinning our main convergence result consists of showing that
\begin{equation}
	\frac{d  V^m(t)}{dt}\leq -\frac{1}{2}(2\lambda-\sigma^2) V^m(t)\,,
\end{equation}
which formally requires that
\begin{equation}
	2(\lambda+\sigma^2)V^m(t)^{\frac{1}{2}}|x_m^*-X_\alpha(\rho_t^m)|+\sigma^2 |x_m^*-X_\alpha(\rho_t^m)|^2\leq \frac{1}{2}(2\lambda-\sigma^2) V^m(t)\,.
\end{equation}
This condition is fulfilled by ensuring that $|x_m^*-X_\alpha(\rho_t^m)|$ remains sufficiently small. To establish this, we shall utilize a quantitative estimate of the Laplace principle in conjunction with \eqref{eq:lipschitz_minizing} in Assumption \ref{ass:2}. Indeed, for each $m \in [M]$, the former provides a bound for $|\bar{x}_m(\bM_t^{-m})-X_{\alpha}(\rho_t^m)|$, while the latter allows us to turn it into a bound for $|x_m^*-X_{\alpha}(\rho_t^m)|$. This reveals a distinction between our multiplayer game setting and the consensus-based methods for optimization (see, e.g., \cite{fornasier2021consensus1}).
%Note that while our main argument  resembles prior convergence techniques for CBO-type methods, see, e.g., \cite{fornasier2021consensus1}, this step is specific to the multiplayer setting. 

\subsection{Quantitative Laplace principle}

The following proposition yields a quantitative estimate of the Laplace principle \eqref{lap_princ} by using the inverse continuity Assumption \ref{ass:3}. To do so, for each player $m \in [M]$, we need to quantify the maximum discrepancy for the cost function $\TE_m$ around the corresponding best strategy with respect to $\bM_t^{-m}$, namely, for $r >0$,  
\begin{equation}\label{eq:definition_of_E_r}
	\TE_r^m(\bM_t^{-m}):=\sup_{x\in B_r(\bxy)}\left|\TE_m(x;\bM_t^{-m})-\TE_m(\bxy;\bM_t^{-m})\right|\,.
\end{equation}
We have the following result:
\begin{proposition}\label{propX}
	Assume that $\{\TE_m\}_{m \in [M]}$ satisfy Assumptions \ref{ass:1}--\ref{ass:4}. For any  $m\in[M]$, $t>0$ and $r\in (0,R_0]$, let $\TE_r^m(\bM_t^{-m})$ be defined as in \eqref{eq:definition_of_E_r}, $0<q\leq \frac{\TE_\infty}{2}$, and $r:=\max\{s\in(0,R_0]:~\sup_{m\in[M], \boldsymbol{y}\in \R^{(M-1)d} }\TE_s^m(\boldsymbol{y})\leq q\}$. Then, it holds that
	\begin{align}\label{eqquan}
		|\bxy-X_\alpha(\rho_t^m)|\leq& \frac{(2q)^\nu}{\eta}+\frac{\exp\left(-\alpha q\right)}{ \rho_t^m (B_r(\bxy))} \int|x-\bxy|\rho_t^m(dx)\,.
	\end{align}
\end{proposition}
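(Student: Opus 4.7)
My plan is to mimic the quantitative Laplace argument of Fornasier--Huang--Pareschi--S\"uli, adapting it so that the target of comparison is the best-response $\bxy$ rather than a globally fixed minimizer. The starting point is to rewrite
\begin{equation*}
X_\alpha(\rho_t^m)-\bxy \;=\; \int \bigl(x_m-\bxy\bigr)\,\frac{\omega_\alpha^{\TE_m}(x_m;\bM_t^{-m})}{\|\omega_\alpha^{\TE_m}(\cdot;\bM_t^{-m})\|_{L^1(\rho_t^m)}}\,\rho_t^m(dx_m),
\end{equation*}
take absolute values, pull them inside the integral, and then cancel the common factor $\exp\bigl(-\alpha\TE_m(\bxy;\bM_t^{-m})\bigr)$ between numerator and denominator. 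This reduces the weight to $\exp(-\alpha\widetilde\TE_m(x_m))$, where $\widetilde\TE_m(x_m):=\TE_m(x_m;\bM_t^{-m})-\TE_m(\bxy;\bM_t^{-m})\geq 0$.

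Next I would split the domain of integration according to the set $A:=\{x_m : |x_m-\bxy|\leq (2q)^\nu/\eta\}$ and its complement. On $A$ the integrand satisfies $|x_m-\bxy|\leq (2q)^\nu/\eta$, so that piece contributes at most $(2q)^\nu/\eta$ to $|X_\alpha(\rho_t^m)-\bxy|$, because the normalized weights integrate to one. On $A^c$ I would argue that $\widetilde\TE_m(x_m)>2q$, so that $\exp(-\alpha\widetilde\TE_m(x_m))\leq \exp(-2\alpha q)$; the complementary integral is then bounded by
\begin{equation*}
\frac{e^{-2\alpha q}\int|x_m-\bxy|\,\rho_t^m(dx_m)}{\int e^{-\alpha\widetilde\TE_m(x_m)}\,\rho_t^m(dx_m)}.
\end{equation*}
For the denominator I would restrict to $B_r(\bxy)$: by the defining property of $r$ and \eqref{eq:definition_of_E_r} we have $\widetilde\TE_m(x_m)\leq \TE_r^m(\bM_t^{-m})\leq q$ on that ball, so the denominator is at least $e^{-\alpha q}\rho_t^m(B_r(\bxy))$, and one factor of $e^{-\alpha q}$ cancels, producing exactly the second term displayed in \eqref{eqquan}.

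The delicate step, and the one I expect to be the main obstacle, is proving the dichotomy on $A^c$. For $x_m \in A^c$ I would split cases: if $x_m \in B_{R_0}(\bxy)$, the first inequality in Assumption \ref{ass:3} gives $\eta|x_m-\bxy|\leq\widetilde\TE_m(x_m)^\nu$, hence $\widetilde\TE_m(x_m)^\nu > (2q)^\nu$ and $\widetilde\TE_m(x_m)>2q$; if instead $x_m\notin B_{R_0}(\bxy)$, the second inequality gives $\widetilde\TE_m(x_m)>\TE_\infty\geq 2q$ thanks to the hypothesis $q\leq\TE_\infty/2$. This is where Assumption \ref{ass:3} is used in full force, and where the constant $2q$ (rather than $q$) is forced upon us in the first term of \eqref{eqquan}. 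Everything else is triangle-inequality and monotonicity bookkeeping, and combining the two pieces yields the claimed inequality.
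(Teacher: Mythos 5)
Your argument is correct and follows essentially the same route as the paper's proof: a near/far split of the integral defining $X_\alpha(\rho_t^m)-\bxy$, a lower bound on the normalizing constant obtained by restricting to $B_r(\bxy)$ where $\widetilde{\TE}_m\leq\TE_r^m(\bM_t^{-m})\leq q$, and Assumption \ref{ass:3} (split into $B_{R_0}$ and its complement, with $q\leq\TE_\infty/2$) to force $\widetilde{\TE}_m>2q$ on the far region. The only cosmetic differences are that you normalize by $\exp(-\alpha\TE_m(\bxy;\bM_t^{-m}))$ up front and take the splitting radius to be $(2q)^\nu/\eta$ directly, whereas the paper uses $\tilde r=\eta^{-1}(q+\TE_r^m(\bM_t^{-m}))^\nu\leq(2q)^\nu/\eta$; both yield exactly \eqref{eqquan}.
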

\begin{rmk}
	Such $r>0$ exists because of Assumption \ref{ass:4}, and it is independent of $m$ and $\bM_t^{-m}$. The right hand-side of equation \eqref{eqquan} can become small by selecting a sufficiently small value for $q$ and a suitably large value for $\alpha$.
\end{rmk}
\begin{proof}
  	To simplify notation, let us denote by $\bar{x}_t^m:=\bar{x}_m(\bM_t^{-m})$. Let $\tilde r\geq r>0$. Using Jensen's inequality one can deduce
   \begin{equation}\label{eq:from_jensen}
   |\bar{x}_t^m-X_\alpha(\rho_t^m)|\leq \int_{B_{\tilde r}(\bar{x}_t^m)}|x-\bar{x}_t^m|\frac{\omega_\alpha^{\TE_m}(x;\bM_t^{-m})\rho_t^m(dx)}{\|\omega_\alpha^{\TE_m}(\cdot;\bM_t^{-m})\|_{L_1(\rho_t^m)}}\nn+\int_{B_{\tilde r}(\bar{x}_t^m)^c}|x-\bar{x}_t^m|\frac{\omega_\alpha^{\TE_m}(x;\bM_t^{-m})\rho_t^m(dx)}{\|\omega_\alpha^{\TE_m}(\cdot;\bM_t^{-m})\|_{L_1(\rho_t^m)}}\,.
   \end{equation}
The first term is bounded by $\tilde r$ since $|x-\bar{x}_t^m|\leq \tilde r$ for all $x\in B_{\tilde r}(\bar{x}_t^m)$. Let us now tackle the second term. It follows from Markov's inequality that
\begin{align}\label{eq:denominator_1}
\|\omega_\alpha^{\TE_m}&(\cdot,\bM_t^{-m})\|_{L_1(\rho_t^m)}\nn\\
\geq& \exp\Big(-\alpha(\TE_r^m(\bM_t^{-m})+\TE_m(\bar{x}_t^m;\bM_t^{-m}))\Big)\cdot\nn\\
&\quad \rho_t^m\left(\bigg\{x:\exp\left(-\alpha\TE_m(x;\bM_t^{-m})\right)\geq \exp\Big(-\alpha(\TE_r^m(\bM_t^{-m})+\TE_m(\bar{x}_t^m;\bM_t^{-m}))\Big)\bigg\}\right)\nn\\
=&	\exp\Big(-\alpha(\TE_r^m(\bM_t^{-m})+\TE_m(\bar{x}_t^m;\bM_t^{-m}))\Big)\cdot\nn\\
&\quad \rho_t^m \left(\bigg\{x: \TE_m(x;\bM_t^{-m})\leq \Big(\TE_r^m(\bM_t^{-m})+\TE_m(\bar{x}_t^m;\bM_t^{-m})\Big)\bigg\}\right)\nn\\
\geq& \exp\left(-\alpha\Big(\TE_r^m(\bM_t^{-m})+\TE_m(\bar{x}_t^m,\bM_t^{-m})\Big)\right)\rho_t^m\left(B_r(\bar{x}_t^m)\right)\,.
\end{align}
Hence, using that
\begin{equation*}
\omega_\alpha^{\TE_m}(x;\bM_t^{-m}) \leq C_{\tilde{r}}:= \exp\Big(-\alpha\inf_{x\in B_{\tilde r}(\bar{x}_t^m)^c}\TE_m(x;\bM_t^{-m})\Big)\,, \quad \text{for all} \ x \in \R^d,
\end{equation*}
and denoting by $C_r$ the right hand-side of \eqref{eq:denominator_1}, for the second term we have
\begin{align}
\int_{B_{\tilde r}(\bar{x}_t^m)^c}&|x-\bar{x}_t^m|\frac{\omega_\alpha^{\TE_m}(x;\bM_t^{-m})\rho_t^m(dx)}{\|\omega_\alpha^{\TE_m}(\cdot;\bM_t^{-m})\|_{L_1(\rho_t^m)}}\nn\\
&\leq \frac{1}{C_r}\int_{B_{\tilde r}(\bar{x}_t^m)^c}|x-\bar{x}_t^m|\omega_\alpha^{\TE_m}(x;\bM_t^{-m})\rho_t^m(dx)\leq \frac{C_{\tilde{r}}}{C_r}\int_{B_{\tilde r}(\bar{x}_t^m)^c}|x-\bar{x}_t^m|\rho_t^m(dx)\,.
\end{align}
Thus from \eqref{eq:from_jensen}, for any $\tilde r\geq r>0$ we obtain
\begin{equation}\label{14}
	|\bar{x}_t^m-X_\alpha(\rho_t^m)|\leq \tilde{r}+\frac{C_{\tilde{r}}}{C_r} \int_{B_{r}(\bar{x}_t^m)}|x-\bar{x}_t^m|\rho_t^m(dx)\,,
\end{equation}
where, recalling the definition of $C_{\tilde{r}}$ and $C_r$ above, 
\begin{equation}\label{eq:C_over_C}
	\frac{C_{\tilde{r}}}{C_r} =\frac{1}{\rho_t^m\left(B_{r}(\bar{x}_t^m)\right)} \exp\left(-\alpha\Big(\inf_{x\in B_{\tilde r}(\bar{x}_t^m)^c}\TE_m(x;\bM_t^{-m})-\TE_r^m(\bM_t^{-m})-\TE_m(\bar{x}_t^m;\bM_t^{-m})\Big)\right)\,.
\end{equation}
Next, we choose $\tilde r=\eta^{-1}(q+\TE_r^m(\bM_t^{-m}))^\nu$, and note that by Assumption \ref{ass:2} this is indeed a suitable choice since
\begin{equation*}
	\begin{aligned}
		\tilde r&\geq \eta^{-1}\TE_r^m(\bM_t^{-m})^\nu\\
		&=\eta^{-1}\bigg(\sup_{x\in B_r(\bar{x}_t^m)}|\TE_m(x;\bM_t^{-m})-\TE_m(\bar{x}_t^m;\bM_t^{-m})|\bigg)^\nu\geq \sup_{x\in B_r(\bar{x}_t^m)}|x-\bar{x}_t^m|=r \,.
	\end{aligned}
\end{equation*}
It only remains to see that with this choice, \eqref{14} and \eqref{eq:C_over_C} turn into \eqref{eqquan}. Notice that
\begin{equation*}
	\inf_{x\in B_{\tilde r}(\bar{x}_t^m)^c}\TE_m(x;\bM_t^{-m})-\TE_m(\bar{x}_t^m;\bM_t^{-m}) \geq
 \begin{cases}
     (\tilde r\eta)^{\frac{1}{\nu}}\,, & \ \mbox{if} \ x\in B_{\tilde r}(\bar{x}_t^m)^c \cap B_{R_0}(\bar{x}_t^m)\,,\\
      \TE_\infty\,, & \ \mbox{if} \ x\in B_{R_0}(\bar{x}_t^m)^c \,.
 \end{cases}
\end{equation*}
Thus, since $(\tilde r\eta)^{\frac{1}{\nu}}=q+\TE_r^m(\bM_t^{-m})\leq 2q\leq \TE_\infty$, we have
\begin{equation*}
    \inf_{x\in B_{\tilde r}(\bar{x}_t^m)^c}\TE_m(x;\bM_t^{-m})-\TE_m(\bar{x}_t^m;\bM_t^{-m}) \geq q+\TE_r^m(\bM_t^{-m})\,.
\end{equation*}
Therefore
\begin{equation*}
	\inf_{x\in B_{\tilde r}(\bar{x}_t^m)^c}\TE_m(x,\bM_t^{-m})-\TE_r^m(\bM_t^{-m})-\TE_m(\bar{x}_t^m,\bM_t^{-m})
	\geq q+\TE_r^m(\bM_t^{-m})-\TE_r^m(\bM_t^{-m})=q\, .
\end{equation*}
In particular, from \eqref{eq:C_over_C} we get  $C_{\tilde{r}}/{C_r}\geq \exp(-\alpha q)[\rho_t^m(B_r(\bar{x}_t^m))]^{-1}$. Inserting the latter and the definition of $\tilde r$ into \eqref{14}, we obtain the claim.
\end{proof}

To eventually apply the above proposition, one needs to ensure that $\rho_t^m (B_r(\bxy))$ is bounded away from $0$ for a finite time horizon $T$ for every $m \in [M]$. To do so, we employ a rather technical argument inspired from \cite[Proposition 23]{fornasier2021consensus1}, and introduce the mollifier $\phi_r^{\bar\bx}\colon \R^{Md}\to\R$ defined by
\begin{equation}\label{lemcutoff}
	\phi_{r}^{\bar\bx}(\bx):=
	\begin{cases}\prod_{m=1}^{M}\prod_{k=1}^{d} \exp \left(1-\frac{r^{2}}{r^{2}-\left(x_m-\bar x_m \right)_{k}^{2}}\right)\,, & \ \text{if} \ \bx=(x_1,\dots, x_m) \in \boldsymbol{B}_r(\bar \bx)\,,\\
	0\,, & \ \text{else}\,,
	\end{cases}
\end{equation}
where $\boldsymbol{B}_r(\bar \bx):=B_r(\bar x_1)\times \dots \times B_r(\bar x_m)$, for $r > 0$, and $\bar \bx:=(\bar x_1, \dots, \bar x_M)$ is any point in $[-\barc_2,\barc_2]^M$. Then, we get the following result, whose proof is postponed to the Appendix.
\begin{proposition}\label{propositive}
	Let $\phi_r$ be the mollifier defined in \eqref{lemcutoff} and $(\bX_t)_{0\leq t\leq T}$ be the solution to \eqref{MVeq} up to a time $T>0$. Assume that $\sup_{m\in[M],t\in[0,T]}\left|x_m^{*}-X_{\alpha}\left(\rho_{t}^m\right)\right|\leq B$ for some $B>0$. Then, there exists some constant $\vartheta>0$ depending only on $d,\lambda,\sigma,r,M,\bx^*,\barc_2$ and $B$ such that
\begin{equation}\label{eq:propositive}
	\inf_{\bar \bx\in [-\barc_2,\barc_2]^M}\PP(\bX_t\in \boldsymbol{B}_r(\bar \bx))\geq \inf_{\bar \bx\in [-\barc_2,\barc_2]^M} \EE[\phi_r^{\bar\bx}(\bX_t)]\geq \inf_{\bar \bx\in [-\barc_2,\barc_2]^M}\EE[\phi_r^{\bar\bx}(\bX_0)]\exp(-\vartheta  t)
\end{equation}
holds, where $\barc_2$ comes from \eqref{boundedbar} and $\bx^*:=(x_1^*, \dots, \bar x_M^*)$ is the unique NE point.
\end{proposition}

\subsection{Global convergence in mean-field law}\label{sec:global_convergence}
Now we are ready to prove the global convergence result, which provides a rate of decreasing for the variance function $V(t)$ defined in \eqref{eq:defi_V_m}, for $t$ within a prescribed time-range. 

\begin{thm}\label{thm:global_convergence} Let $(\bX_t)_{t\geq 0}$ be a solution to \eqref{MVeq}. Assume that $\{\TE_m\}_{m \in [M]}$ satisfy Assumptions \ref{ass:1}--\ref{ass:4}, and $\lambda, \sigma$ satisfy
\begin{equation}\label{eq:conditions_lambda_and_sigma}
	2\lambda > \sigma^2\,, \quad \text{and} \quad \bar{c}_1\leq \frac{1}{4}\min\left\{\frac{2\lambda-\sigma^2}{8\sqrt{M}(\lambda+\sigma^2)},\sqrt{\frac{2\lambda-\sigma^2}{4M\sigma^2}}\right\}\,.
\end{equation}	
Let $\varepsilon >0$ be any accuracy, and assume that the initial data satisfies $V(0)\geq 2\varepsilon$, 
\begin{equation}\label{initial-preparation}
\rho_0^m(B_{r_0}(\bar x_m(\textbf{M}_0^{-m})))>0\text{ for all } m\in [M],\quad  \text{and}\quad
 \inf_{\bar \bx\in [-\barc_2,\barc_2]^M}\EE[\phi_{r_1}^{\bar\bx}(\bX_0)]>0,
 \end{equation}
  where $r_0,r_1>0$ are to be determined later. Then for $\alpha$ sufficiently large there exists some $0<T_\ast\leq T_\varepsilon$ such that
\begin{equation}\label{eq:rate_for_V}
	 V(t)\leq V(0)\exp\left(-\frac{2\lambda-\sigma^2}{2}t\right)\,,\quad \mbox{for} \ t\in[0,T_\ast)\,,
\end{equation}
and $V(T_\ast)\leq \varepsilon$. Here $T_\varepsilon:=\frac{2}{2\lambda-\sigma^2}\log(\frac{V(0)}{\varepsilon})$. 
\end{thm}
\begin{rmk}
The theorem above asserts that for any given accuracy $\varepsilon>0$, we can choose $\alpha$ to be sufficiently large (dependent on $\varepsilon$) such that the variance $V(t)$ (dependent on $\alpha$) of the CBO dynamic decreases exponentially until it achieves the desired accuracy of $\varepsilon$. Conversely, for any fixed $\alpha$, the achievable accuracy $\varepsilon$ is constrained and depends on the value of $\alpha$. This can be seen in \eqref{eq:dependency_of_alpha_with_epsilon}. In addition, the initial preparation \eqref{initial-preparation} may be satisfied for all $r_0,r_1>0$ if the density function of $\bX_0$ is continuous and has support in the whole space $\mathbb R^{Md}$; for instance, this may be easily satisfied when generating initial samples of  $\bX_0$ with Gaussian distributions.
\end{rmk}
	\begin{proof}
	First, we define 
		\begin{equation}
			T_\alpha:=\inf\bigg\{t\geq0\,:\,  V(t)\leq  \varepsilon \quad \mbox{or} \quad \sum_{m= 1}^M|x_m^*-X_\alpha(\rho_t^m)|\geq 2Mc_3V(0)^\frac{1}{2} \bigg\}\,,
		\end{equation}
	where $c_3$ is the constant given by
	\begin{equation}\label{eq:defi_c3}
		c_3:=\min\left\{\frac{2\lambda-\sigma^2}{8\sqrt{M}(\lambda+\sigma^2)},\sqrt{\frac{2\lambda-\sigma^2}{4M\sigma^2}}\right\}\,.
	\end{equation}
	As in Proposition \ref{propX}, we simplify notation denoting by $\bar{x}_t^m:=\bxy$ for all $t \in [0, T_\alpha)$. Firstly, it follows from Proposition \ref{propX} that for
	\begin{equation*}
		q_0:=\frac{1}{2}\min\bigg\{\bigg(\frac{\eta c_3 V(0)^{\frac{1}{2}}}{2}\bigg)^{1/\nu},\TE_\infty\bigg\}\,, \quad \text{and} \quad r_0:=\max\left\{s\in[0,R_0]: \sup_{m\in[M]}\sup_{\by\in\RR^{(M-1)d}}\TE_s^m(\by) \leq q_0\right\}\,,
	\end{equation*}
	which, by construction, satisfy $r_0\leq R_0$ and $\TE_{r_0}^m(\boldsymbol{M}^{-m}_0)\leq \frac{\TE_\infty}{2}$ with $\TE_{r_0}^m(\boldsymbol{M}^{-m}_0)$ defined as in \eqref{eq:definition_of_E_r}, we have the following estimate
	\begin{equation}\label{eq:estimate_laplace_variance}
		|\bar{x}^m_0-X_\alpha(\rho_0^m)|\leq  \frac{(2q_0)^\nu}{\eta}+\frac{\exp\left(-\alpha q_0\right)}{ \rho_0^m\left(B_{r_0}(\bar{x}^m_0)\right)}\sqrt{2}(1+\bar{c}_1) V(0)^{\frac{1}{2}}\,,
	\end{equation}
	where we have used that
	\begin{equation*}
		\int|x-\bar{x}^m_0|\rho_0^m(dx) \leq V^m(0)^{\frac{1}{2}}+ \bar{c}_1 \bigg(\sum_{k\neq m}^{M} V^k(0)\bigg)^{\frac{1}{2}} \leq \sqrt{2}(1+\bar{c}_1) V(0)^{\frac{1}{2}}\,,
	\end{equation*}
which holds since, by Assumption \ref{ass:2},
\begin{align*}
	|x-\bar{x}^m_0|&\leq|x-x_m^*|+ |x_m^*-\bar{x}^m_0|\\
	&=|x-x_m^*|+ |\bar x_m(\bx_{-m}^*)-\bar x_m(\boldsymbol{M}^{-m}_0)|\leq |x-x_m^*|+  \bar{c}_1 \bigg(\sum_{k\neq m}^{M} V^k(0)\bigg)^{\frac{1}{2}}.
\end{align*}
Then, using $\bar{c}_1\leq \frac{c_3}{4}$, Assumption \ref{ass:2} and the definition of $q_0$, it holds that
\begin{align*}
|x_m^*-X_\alpha(\rho_0^m)|&\leq |x_m^*-\bar{x}^m_0|+ |\bar{x}^m_0-X_\alpha(\rho_0^m)|\\
&\leq \frac{c_3}{4}V(0)^\frac{1}{2}+\frac{c_3}{2}V(0)^{\frac{1}{2}}+\frac{\exp\left(-\alpha q_0\right)}{ \rho_0^m\left(B_{r_0}(\bar{x}^m_0)\right)}\sqrt{2}(1+\bar{c}_1)V(0)^{\frac{1}{2}}\\
&\leq \bigg(\frac{3}{4}c_3+\frac{\exp\left(-\alpha q_0\right)}{\rho_0^m\left( B_{r_0}(\bar{x}^m_0)\right)}\sqrt{2}(1+\bar{c}_1) \bigg)V(0)^{\frac{1}{2}}\,.
\end{align*}
If we now choose $\alpha$ sufficiently large, e.g., $\alpha\geq \sup_{m\in[M]}\alpha_0^m$ with
\begin{equation*}
	\alpha_0^m=\frac{-\log(c_3 \rho_0^m (B_{r_0}(\bar{x}^m_0)))+\log(4\sqrt{2}(1+\bar{c}_1))}{q_0}\,,
\end{equation*}
one has
\begin{equation*}
    \sum_{m= 1}^M|x_m^*-X_\alpha(\rho_0^m)|\leq Mc_3 V(0)^{\frac{1}{2}}\,.%\quad \forall m\in [M].
\end{equation*}
This together with the assumption that $V(0)\geq 2\varepsilon$ implies $T_\alpha>0$ . 

According to the definition of $T_\alpha$ one has 
\begin{equation*}
	V(t)>\varepsilon\,, \quad \text{and} \quad \sum_{m=1}^M|x_m^*-X_\alpha(\rho_t^m)|< 2Mc_3 V(0)^{\frac{1}{2}}\,, \quad \text{for all} \ t \in [0, T_\alpha)\,,
\end{equation*}
and at $t=T_\alpha$, it holds that $V(T_\alpha)\leq \varepsilon$ or  $\sum_{m=1}^M|x_m^*-X_\alpha(\rho_{T_\alpha}^m)|\geq2Mc_3 V(0)^{\frac{1}{2}}$. Next, we prove that $T_\alpha\leq T_\varepsilon$ and $V(t)$ decreases exponentially on $[0,T_\alpha)$.

\textbf{Case} $T_\alpha\leq T_\varepsilon$: Arguing as in \eqref{eq:estimate_laplace_variance}, we get for every $t>0$
\begin{align}
	|\bar{x}_t^m-X_\alpha(\rho_t^m)|
\leq \frac{(2q_1)^\nu}{\eta}+\frac{\exp\left(-\alpha q_1\right)}{ \rho_t^m (B_{r_1}(\bar{x}_t^m))}\sqrt{2}(1+\bar{c}_1) V(t)^{\frac{1}{2}}
	\,,
\end{align}
where $q_1$ and $r_1$ are constants defined by
\begin{equation}
	q_1:=\frac{1}{2}\min\bigg\{\bigg(\frac{\eta c_3\sqrt{\varepsilon}}{2}\bigg)^{1/\nu}\,,\TE_\infty\bigg\}, \quad \text{and} \quad r_1:=\max\left\{s\in[0,R_0]: \sup_{m\in[M]}\sup_{\by\in\RR^{(M-1)d}}\TE_s^m(\by) \leq q_1\right\}\,.
\end{equation}
Then, we obtain
\begin{align}
	|x_m^*-X_\alpha(\rho_t^m)|&\leq |x_m^*-\bar{x}_t^m|+ |\bar{x}_t^m-X_\alpha(\rho_t^m)|\nn\\
	&\leq \frac{c_3}{4}V(t)^\frac{1}{2}+\frac{c_3}{2}V(t)^{\frac{1}{2}}+\frac{\exp\left(-\alpha q_1\right)}{\rho_t^m \left(B_{r_1}(\bar{x}_t^m)\right)}\sqrt{2}(1+\bar{c}_1)V(t)^{\frac{1}{2}}\nn\\
	&\leq \frac{3c_3}{4}V(t)^\frac{1}{2}+\frac{\exp\left(-\alpha q_1\right)}{\inf_{|\bar x_m|\leq \barc_2}\rho_t^m \left(B_{r_1}(\bar{x}_m)\right)}\sqrt{2}(1+\bar{c}_1)V(t)^{\frac{1}{2}}
	\,.
\end{align}
Moreover, \eqref{eq:propositive} implies that 
\begin{align*}
&\inf_{|\bar x_m|\leq \barc_2}\rho_t^m \left(B_{r_1}(\bar{x}_m)\right)=\inf_{|\bar x_m|\leq \barc_2} \PP\big(X_t^m\in B_{r_1}(\bar{x}_m)\big)\nn\\
\geq&\inf_{\bar \bx\in [-\barc_2,\barc_2]^M}\PP(\bX_t\in \boldsymbol{B}_{r_1}(\bar \bx))\geq \inf_{\bar \bx\in [-\barc_2,\barc_2]^M} \EE[\phi_{r_1}^{\bar\bx}(\bX_t)]\geq \inf_{\bar \bx\in [-\barc_2,\barc_2]^M}\EE[\phi_{r_1}^{\bar\bx}(\bX_0)]\exp(-\vartheta  t)
\end{align*}
  where $\vartheta$ depends only on $d,\lambda,\sigma,M,\barc_2,\bx^*$ and $B=c_3 V(0)^{\frac{1}{2}}$.  
Since  $c_3 V(0)^{\frac{1}{2}}$ is actually independent of $\alpha$, we know $\vartheta $ is independent of $\alpha$.
This allows us to conclude that for all $t\in[0,T_\alpha)$
\begin{align}\label{eq59}
		&|x_m^*-X_\alpha(\rho_t^m)|
		\leq \frac{3c_3}{4}V(t)^{\frac{1}{2}}+\frac{2\exp\left(-\alpha q_1\right)\exp(\vartheta  T_\varepsilon)}{\inf_{\bar \bx\in [-\barc_2,\barc_2]^M}\EE[\phi_{r_1}^{\bar\bx}(\bX_0)]}\sqrt{2}(1+\bar{c}_1) V(t)^{\frac{1}{2}}\leq c_3 V(t)^{\frac{1}{2}}
		\,,
\end{align}
where we choose $\alpha\geq \alpha_1$ with
\begin{equation}\label{eq:dependency_of_alpha_with_epsilon}
	\alpha_1:=\frac{\vartheta  T_\varepsilon-\log(c_3\inf_{\bar \bx\in [-\barc_2,\barc_2]^M}\EE[\phi_{r_1}^{\bar\bx}(\bX_0)])+\log(8\sqrt{2}(1+\bar{c}_1))}{q_1}\,.
\end{equation}
From the upper bound for the time derivative of $V^m(t)$ given in Lemma \ref{lemV}, using \eqref{eq59} and the definition of $c_3$ in \eqref{eq:defi_c3}, we deduce that
\begin{equation}
		\sum_{m= 1}^M\frac{d  V^m(t)}{dt}\leq-\frac{2\lambda-\sigma^2}{2}\sum_{m= 1}^M V^m(t)\,,
\end{equation}
which, using Gronwall's inequality, leads to
\begin{equation}
	V(t)\leq V(0)\exp\left(-\frac{2\lambda-\sigma^2}{2}t\right)\,, \quad \mbox{for} \ t\in[0,T_\alpha)\,.
\end{equation}
This implies
\begin{equation}
	|x_m^*-X_\alpha(\rho_{T_\alpha}^m)|\leq c_3 V(T_\alpha)^{\frac{1}{2}}=c_3V(T_\alpha)^\frac{1}{2}\leq  c_3 V(0)^{\frac{1}{2}}\,,\quad \text{for all} \ m\in [M]\,,
\end{equation}
namely, $\sum_{m= 1}^M|x_m^*-X_\alpha(\rho_{T_\alpha}^m)|\leq Mc_3 V(0)^{\frac{1}{2}}$. Then, according to the definition of $T_\alpha$ we must have $V(T_\alpha)\leq \varepsilon$.
	
\textbf{Case} $T_\varepsilon<T_\alpha$: By the definition of $T_\alpha$ we know that $V(t)\geq \varepsilon$ and $\sum_{m= 1}^M|x_m^*-X_\alpha(\rho_t^m)|\leq2Mc_3 V(0)^{\frac{1}{2}}$ for all $t\in[0,T_\varepsilon]$. Then, arguing as the in the first case, we conclude 
\begin{equation*}
		V(t)\leq V(0)\exp\left(-\frac{2\lambda-\sigma^2}{2}t\right)\quad \mbox{for} \ t\in[0,T_\varepsilon].
\end{equation*}
This estimate and the fact that $T_\varepsilon=\frac{2}{2\lambda-\sigma^2}\log(\frac{V(0)}{\varepsilon})$ imply $V(T_\varepsilon)\leq \varepsilon$, which is a contradiction. Thus this case can never happen.
\end{proof}

\section{Numerical experiments}\label{sec3}

In this section, we present our numerical experiments, which are performed in Python on a 12th-Gen.~Intel(R) Core(TM) i7--1255U, $1.70$--$4.70$ GHz laptop with $16$ Gb of RAM and are available for reproducibility at \href{https://github.com/echnen/CBO-multiplayer}{https://github.com/echnen/CBO-multiplayer}.

As usual in CBO schemes, we discretize the interacting particle system in \eqref{particle} with a Euler--Maruyama time discretization scheme \cite{euler_scheme_1}, resulting in the method depicted in Algorithm \ref{alg:discretized_scheme}.

\normalem
\begin{algorithm}[t]
	\caption{Consensus-based zeroth-order method for multiplayer games.}\label{alg:discretized_scheme}
	\KwData{$d \in \N$, $M\in \N$, an instance of a NE problem according to Definition \ref{nash} with $\mathcal{E}_m$ for $m \in [M]$}
	\KwResult{$(x^\alpha_1,\dots, x_M^\alpha)\in (\R^d)^M$ approximate solution to the NE problem}
	\textbf{Algorithm's parameters:} $N \in \N$, $\lambda, \sigma, \alpha, T >0$, $K\in \N$ and set $dt = T/K$\\
	\textbf{Initialize:} $X_m^i \in \R^d$ for $i \in [N]$ and $m \in [M]$\\
	\While{$k \leq K$}{
		Compute $M_m = \tfrac{1}{N}\sum_{i=1}X^i_m$ for all $m \in [M]$\\
		\For{$m \in [M]$}{
		Set $\boldsymbol{M}^{-m}=(M_1, \dots, M_{m-1}, M_{m+1}, \dots, M_M)$\\
		Let $E_m^i=\mathcal{E}_m(X_m^i; \boldsymbol{M}^{-m})$ for all $i \in [N]$\\
		Find $E_m^*=\min_{i \in [N]} \ E_m^i$\\
		Compute $W_m^i = \exp\{ -\alpha (E_m^i-E_m^*) \}$ for all $i \in [N]$\\
		}
		Compute $x^\alpha_m = \left[\sum_{i=1}^N (W_i^m X_i^m)\right]/\left[\sum_{i=1}^N W_i^m\right]$ for $m \in [M]$  \hfill \# compute consensus\\
		Sample $ d B^i_m\sim \mathcal{N}(0, \sqrt{dt})$ for all $m \in [M]$ and $i \in [N]$\\
		\For{$i \in [N]$ and $m \in [M]$}{
		\eIf{Anisotropic}{
			$X^{i}_m \leftarrow X^i_m -\lambda(X^i_m-x^\alpha_m) dt + \sigma \diag (X^i_m - x^\alpha_m)  d B^i_m$ \hfill \# update distributions\\
		}{
			$X^{i}_m \leftarrow X^i_m -\lambda(X^i_m-x^\alpha_m) dt + \sigma  |  X^i_m - x^\alpha_m |   d B^i_m$\\
		}}
		$k=k+1$
	}
\end{algorithm}
\ULforem

\subsection{One-dimensional illustrative example} We start with an illustrative example with $d=1$, which provides us with an initial insight into the numerical performance of Algorithm \ref{alg:discretized_scheme}. Consider the NE problem with
\begin{equation}\label{eq:numerics_1d_non_perturbed}
	\widetilde{\mathcal{E}}_m(x_m; \bx_{-m}) := \frac{1}{2}\bigg(a_m x_m - \sum_{i\neq m}^Mx_i - b_m\bigg)^2\,, \quad \text{for all} \ m \in [M]\,,
\end{equation} 
where $a_m>M, b_m \in \R$ for each $m \in [M]$, and let $\bx^*:=(x_1^*, \dots, x_M^*)$ be the unique equilibrium point. To test the performance of Algorithm \ref{alg:discretized_scheme}, we introduce in \eqref{eq:numerics_1d_non_perturbed} a non-convex perturbation as follows  
\begin{equation}\label{eq:numerics_1d_perturbed}
	{\mathcal{E}}_m(x_m; \bx_{-m}) = \widetilde{\mathcal{E}}_m(x_m; \bx_{-m})  + \mathcal{R}(x_m-x_m^*)\,, \quad \text{for all} \ m \in [M]\,,
\end{equation}
where $\mathcal{R}:\R\to \R$ is given by $\mathcal{R}(t):= 10 (1 - \cos(10 t)) + t^2$ for $t \in \R$. Note that, in this way, the (unique) equilibrium point for \eqref{eq:numerics_1d_perturbed} is once again $\bx^*$.

To illustrate the typical behavior of Algorithm \ref{alg:discretized_scheme}, we present in Figure \ref{fig:time_snapshots} four different time snapshots of the particles for each player, alongside the corresponding cost functions, the current consensus point, and the NE. For more in-depth information, consult Figure \ref{fig:time_snapshots}.

\begin{figure}[t]
	\includegraphics[width=\linewidth]{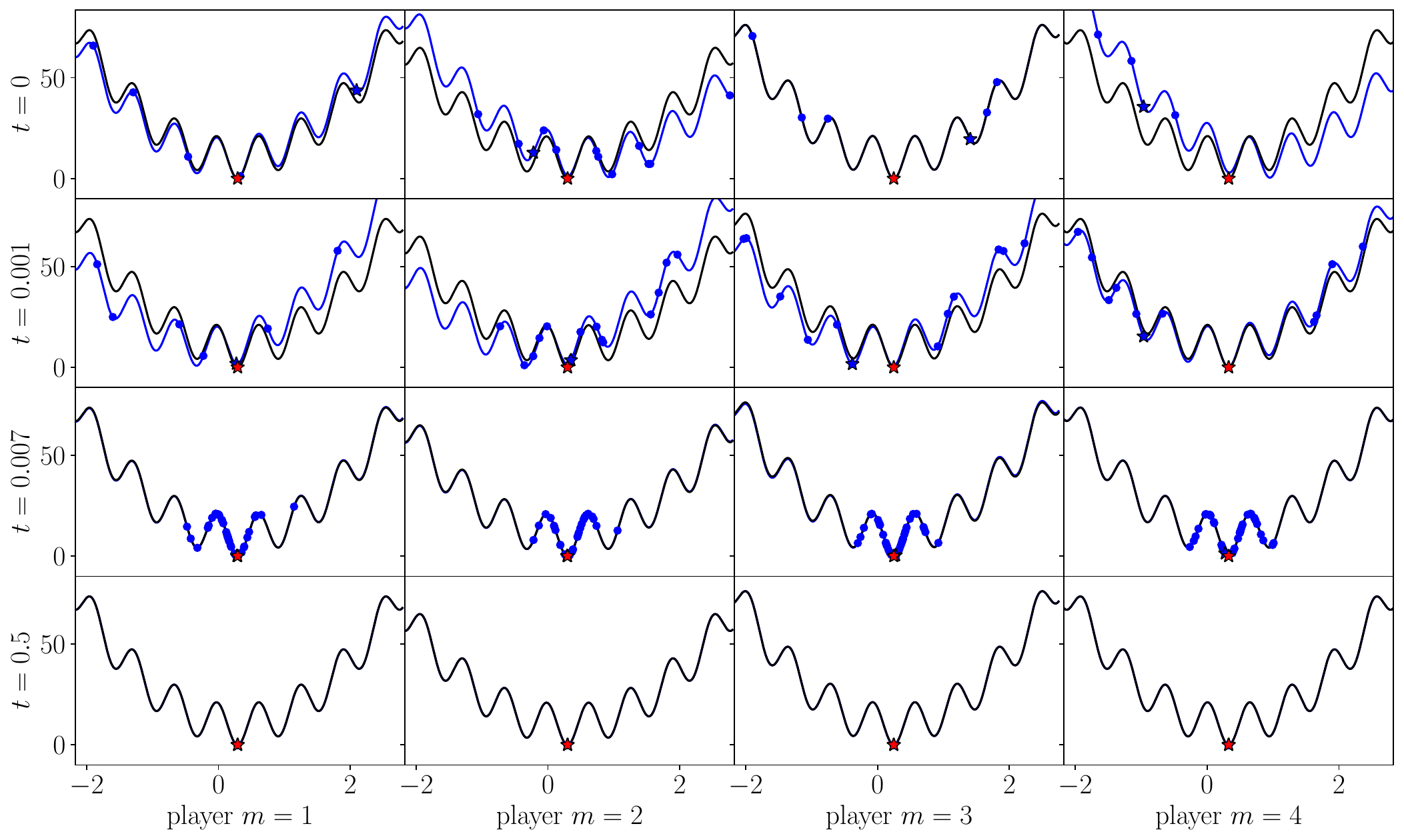}
	\caption{Different time-snapshots of the particles corresponding to the four players at different times. The blue points represent the particles. In each case, the blue curve represents the cost function of player $m$, with respect to the current consensus points of the other players, namely the function $x\mapsto \mathcal{E}_m(x; \bx_{-m}^\alpha)$. The black curve represents the loss function of player $m$ with respect to optimal choices of the other players, i.e., the function $x\mapsto \mathcal{E}_m(x; \bx^*_{-m})$. The blue stars represent the current consensus points, while the red stars represent the NE point.}
	\label{fig:time_snapshots}
\end{figure} 

\subsubsection{Experimental setup} We now investigate the influence of the algorithm's parameters on the convergence behavior of Algorithm \ref{alg:discretized_scheme} applied to solve \eqref{eq:numerics_1d_perturbed} with $M=4$. Specifically, we consider the following parameter settings:
\begin{enumerate}
	\item (Dependence on $\alpha$) We set $N = 40$, $dt = 10^{-4}$, $\sigma = 10^{-1}$, $\lambda = (10^4 + \sigma^2)/2$ and consider $500$ different choices of $\alpha$ spaced evenly in a logarithmic scale from $\alpha = 10^{-6}$ to $\alpha = 10^7$.\label{item:dependence_on_alpha}
	\item (Dependence on $\lambda$) We set $N = 40$, $dt = 10^{-4}$, $\sigma = 10^{-1}$, $\alpha = 10^7$, and consider $500$ different choices of $\lambda$ with $\lambda = (u + \sigma^2)/2$ for $500$ choices of $u$ spaced evenly in a logarithmic scale from $u = 10^2$ to $u = 10^4$.\label{item:dependence_on_lambda}
	\item (Dependence on $N$) We set $dt = 10^{-4}$, $\sigma = 10^{-1}$, $\lambda = (10^4 + \sigma^2)/2$ and consider $500$ different choices of $N$ spaced evenly in a linear scale from $N = 4$ to $N = 4000$.\label{item:dependence_on_N}
	\item (Joint dependence on $\lambda$ and $\sigma$)\label{item:dependence_on_lambda_and_sigma} In this experiment, we test the joint dependence on $\lambda$ and $\sigma$. To do so, we fix three different values of $N$, namely $N=10, 100$, or $N=1000$, $\alpha=10^7$, $dt = 10^{-2}$, and pick $100$ choices of $\lambda$ and $\sigma$ spaced evenly in a logarithmic scale from $10^{-1}$ to $10^{2.5}$ and from $10^{-1}$ to $10^{1.2}$, respectively. 
\end{enumerate}
We let the algorithm run for $K=100$ iterations, i.e., up to time $T=K~ dt  = 0.01$ for cases \ref{item:dependence_on_alpha}, \ref{item:dependence_on_lambda} and \ref{item:dependence_on_N}, and $T=0.1$ for case \ref{item:dependence_on_lambda_and_sigma}. We sample the starting measures from a Gaussian distribution centered in $\bx^* + (-2, 1, 0, 3)$ with variance $5$. Whenever possible, i.e., in cases \ref{item:dependence_on_alpha}, \ref{item:dependence_on_lambda} and \ref{item:dependence_on_lambda_and_sigma} we consider the same starting point for every independent run. In all the above cases, at each iteration of the method we measure the cumulative variance, i.e., 
\begin{equation}
	V(t):= \sum_{m=1}^M V^m(t) = \sum_{m=1}^M \bigg( \frac{1}{N}\sum_{i=1}^N |X_t^{m, i}-x_m^*|^2\bigg)\,,
\end{equation}
which, for $N$ large enough, should converge to zero with a rate according to Theorem \ref{thm:global_convergence}. We report in Figure \ref{fig:dependency_on_aln} the variance as a function of time for all the independent runs in cases \ref{item:dependence_on_alpha}, \ref{item:dependence_on_lambda} and \ref{item:dependence_on_N}. For case \ref{item:dependence_on_lambda_and_sigma}, we display in Figure \ref{fig:dependence_on_lambda_and_sigma} the logarithmic value of the variance at the final iteration for $N=10, 100$ and $N=1000$.

\subsubsection{Results and discussion}\label{sec:results_and_discussions}

In the first plot in Figure \ref{fig:dependency_on_aln}, which corresponds to the parameters setting in case \ref{item:dependence_on_alpha}, we see that the rate for $V(t)$ seems to be independent on $\alpha$, which only affects the final accuracy of the method. Specifically, even with a very small $\alpha$ of the order of $10^{-6}$ the variance still decreases in the initial iterations, but quickly stagnates. On the other hand, for a large $\alpha$ of the order of $10^7$, the final accuracy reaches $10^{-9}$. Note that this does not contradict Theorem \ref{thm:global_convergence}, as $T_\epsilon$ in \eqref{eq:rate_for_V} depends on $\epsilon$, which in turn depends on $\alpha$ as shown in \eqref{eq:dependency_of_alpha_with_epsilon}. In particular, for $\alpha$ small, $T_\epsilon$ might be small too and, thus, the rate in \ref{eq:rate_for_V} would be guaranteed only for a potentially short time.

In the second plot in Figure \ref{fig:dependency_on_aln}, which corresponds to the parameters setting in case \ref{item:dependence_on_lambda}, we plot the variance decrease as a function of time for different choices of $\lambda$, but with fixed $\sigma$ and $\alpha$. Here, we can see that indeed the choice of $\lambda$ seems only to affect the rate for $V(t)$, as also suggested by Theorem \ref{thm:global_convergence}, but does not affect the final accuracy. Specifically, very large choices of $\lambda$ lead to fast optimization methods, but might be unstable.

In the third plot in Figure \ref{fig:dependency_on_aln}, which corresponds to the parameters setting in case \ref{item:dependence_on_N}, we see that the rate of $V(t)$ does not seem to depend on the choice of $N$, expect for some very small values on $N$, for which the method is not able to find the NE. Remarkably, also the final accuracy, i.e., $V(t)$ for $t=T$ does not seem to be affected by the choice of $N$. 

Eventually, Figure \ref{fig:dependence_on_lambda_and_sigma}, which corresponds to experiment in case \ref{item:dependence_on_lambda_and_sigma}, confirms that the requirement $2\lambda- \sigma^2>0$ in Theorem \ref{thm:global_convergence} is in fact quite sharp. Indeed, the converging area (in blue) falls into the region for which $2\lambda>\sigma^2$. Note that the green region in the left-bottom corner of the image does not indicate either a converging, nor a diverging behavior, since $\lambda$ is too small to provide evidence of convergence in the prescribed time range, since the rate might be too small, see Figure \ref{fig:dependency_on_aln} (Middle). However, from Figure \ref{fig:dependency_on_aln} (Left and Right), we might consider the parameters corresponding to the area delineated by the black curve, i.e., those cases in which $V(T) < V(0)$, as leading to a converging (but potentially slow) methods. It is interesting to notice that the latter is perfectly aligned with the curve $2\lambda-\sigma^2=0$, especially with $N=1000$. It is also worth mentioning that the plots in Figure \ref{fig:dependency_on_aln} (Left and Right) indicate that the information displayed in Figure \ref{fig:dependence_on_lambda_and_sigma} may not depend heavily on $\alpha$. Figure \ref{fig:dependence_on_lambda_and_sigma} also reveals that if $N$ is not large, choosing $\lambda$ too large might lead to methods that do not converge to the NE with high precision. This suggests that leaving some room for exploration could be beneficial. Moreover, Figure \ref{fig:dependence_on_lambda_and_sigma} also indicates that the second condition in \eqref{eq:conditions_lambda_and_sigma}, which, in practice, requires $2\lambda - \sigma^2$ to be sufficiently large, may be redundant and could potentially be removed, as, e.g., in \cite{fornasier2022anisotropic} for minimization problems. However, this does not appear trivial in the present multiplayer game setting.

\begin{figure}[t]
	\centering
	\begin{subfigure}{0.9\textwidth}
		\includegraphics[width=1\linewidth]{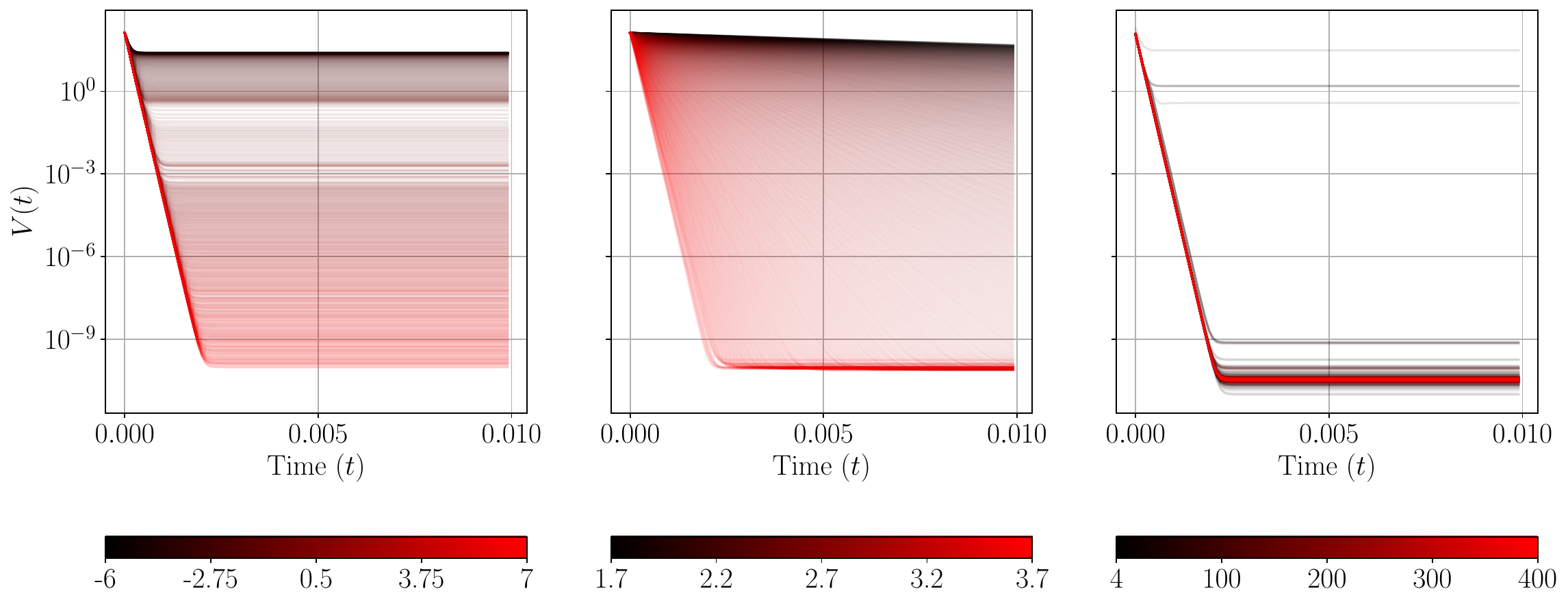}
		\caption{Analyzing variance reduction over time across different parameter choices. Left: Dependency on $\alpha$, see case \ref{item:dependence_on_alpha}, with each color associated to a logarithmic value of $\alpha$. Middle: Dependency on $\lambda$, see case \ref{item:dependence_on_lambda}. Once again, each color is associated to a logarithmic value of $\lambda$. Right: Dependency on $N$, see case \ref{item:dependence_on_N}.}
		\label{fig:dependency_on_aln}
	\end{subfigure}\\[0.1cm]
	\begin{subfigure}{0.9\textwidth}
		\includegraphics[width=1\linewidth]{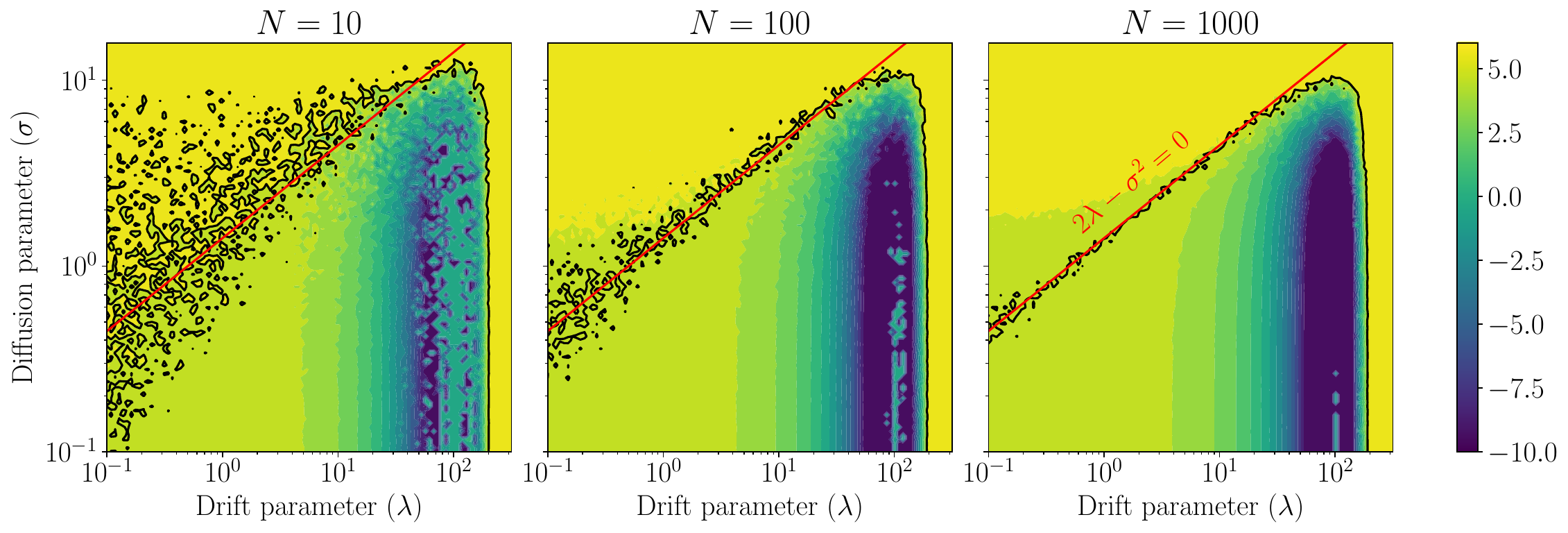}
		\caption{Logarithm of the variance at the final iteration for different parameter's choices. The black line corresponds to the value of $V(0)$. Confer case \ref{item:dependence_on_lambda_and_sigma} for details and Section \ref{sec:results_and_discussions} for comments.}
		\label{fig:dependence_on_lambda_and_sigma}
	\end{subfigure}
\caption{Studying the dependence of Algorithm \ref{alg:discretized_scheme} with respect to the algorithm's parameters to solve \eqref{eq:reward_i}.}
\end{figure}

\subsection{Nonlinear oligopoly games with several goods}

In this section, we present the numerical performance of Algorithm \ref{alg:discretized_scheme} to solve a nonlinear variant of a classical NE problem that arises from economics, especially, from Cournot's model of oligopoly games with several goods \cite{cournot1897researches}. We assume there are $M\in \N$ economic agents holding $d\in \N$ goods. For every $h \in [d]$, agent $i$ produces $x_{i, h}$ quantity of good $h$. Every product incurs into a production cost quantified by $c_h>0$, for all $h \in [d]$. In the present model, we assume that the quantity of each good influences the price of the others as well. In this setting, a typical price function is of the form
\begin{equation}
	p_h(\bx) =  \max\{a - b \ell_h^T(x_1+\dots + x_M), 0\}^\beta\,, \quad \text{for all} \ \bx:=(x_1,\dots, x_M)\in \R^{d\times M}\,,
\end{equation}
where $\beta, a, b>0$, and $\ell_h \in \R^d_+$ for all $h\in [d]$. Let us denote by $\boldsymbol{p}:\mathbb{R}^{d\times M}\to \mathbb{R}^d$ the function defined by $\boldsymbol{p}_h(\bx) = p_h(\bx)$ for all $h\in [d]$ and all $\bx:=(x_1,\dots, x_M)\in \R^{d\times M}$. The resulting cost function for agent $m$ is the difference between production cost and price, scaled with the quantity of good produced, namely
\begin{equation}\label{eq:reward_i}
	\mathcal{E}_m(x_m; \bx^{-m}) = \sum_{h=1}^d x_{m, h} (c_h - p_h(x_1, \dots, x_M)) =  x_m\cdot ( c - \boldsymbol{p}(\bx))\,, \quad \text{for all} \ m \in [M]\,,
\end{equation}
where $\cdot$ denotes the usual scalar product in $\R^d$. Note that the equilibrium problem with functions $\mathcal{E}_m$ defined in \eqref{eq:reward_i} is nonlinear and non-convex. In this section, we: (i) show that Algorithm \ref{alg:discretized_scheme} can indeed approximate an optimal solution efficiently; (ii) investigate the dependency of Algorithm \ref{alg:discretized_scheme} with respect to the dimension.

\subsubsection{Experimental setup}

To instantiate a synthetic problem, we proceed as follows. We set $\beta =2$, $a=10^2$, $b=10^{-3}$ and $L=3I_d + \mathbf{1}\mathbf{1}^*$, where $I_d$ is the identity in $\R^d$ and $\mathbf{1}:=(1,\dots, 1)\in \R^d$, and let $\ell_1,\dots, \ell_d$ be the rows of $L$. Note that these choices are somewhat arbitrary. Then, we set $\boldsymbol{\phi}\colon \R^d \to \R^d$ to be the function defined by
\begin{equation}
	\boldsymbol{\phi}_h(z) := \max \{a - b z_h, 0\}^\beta\,, \quad \text{for all} \ h \in [d], \ z=(z_1,\dots, z_d) \in \R^d\,,
\end{equation}
With this notation, the price function $\boldsymbol{p}$ can be written as $\boldsymbol{p}(\bx)=\boldsymbol{\phi}(L(x_1+\dots+x_M))$ for all $\bx=(x_1,\dots, x_M)\in \R^{d\times M}$, and we can write the optimality condition conveniently as
\begin{equation}\label{eq:optimality_conditions}
	0 = \partial_{x_m}\mathcal{E}_m(\bx) = c_m - \boldsymbol{\phi}\big(L(\bx_1+\dots+\bx_M)\big) - L^{*}\mathrm{D}\boldsymbol{\phi}\big(L(\bx_1+\dots+\bx_M)\big) \bx_m\,, \quad \text{for all} \ m \in [M]\,.
\end{equation}
We instantiate a synthetic problem by sampling the equilibrium point $\bx^*\in \R^{d\times M}$ uniformly in $[0, 10]^{d\times M}$ and computing $c$ via the right hand-side of \eqref{eq:optimality_conditions}, the above choice of $a, b$ and $L$ lead to $c\in \R^d_+$.

Once a NE problem is instantiated, we perform the following numerical experiments:

\begin{enumerate}
	\item (Anisotropic vs isotropic)\label{item:ani_vs_iso} First, we compare anisotropic against isotropic dynamics. To do so, we set $d=5$, $M=4$, $N=10^4$, $dt = 10^{-3}$, $\sigma = 1$, $\lambda = (10+\sigma^2)/2=5.5$, $\alpha = 10^{10}$, $K=1000$ and compare $20$ independent runs of Algorithm \ref{alg:discretized_scheme} with different initialization sampled from Gaussian distributions with variance $10$ and centered in $\bx^*+u$ where $u$ is sampled uniformly in $[-1, 1]^{d\times M}$ for each case. Then, we repeat the experiment with $\lambda = 50.5$.
	\item (Joint dependence on $N$ and $d$)\label{item:N_vs_d} We study the dependence of Algorithm \ref{alg:discretized_scheme} with respect to the dimension $d$ against the number of sample points $N$. In particular, we set $dt=10^{-4}$, $M=4$, $\sigma=10^{-1}$, $\lambda=(10^4+\sigma^2)/2$, $\alpha=10^{10}$, $K=15$ and let $d$ range from $2$ to $20$, and $N$ range from $2$ to $500$.
	\item (Joint dependence on $\alpha$ and $d$)\label{item:alpha_vs_d} We study the dependence of Algorithm \ref{alg:discretized_scheme} with respect to the dimension $d$ against $\alpha$. Specifically, we set $M=4$, $dt=10^{-4}$, $\sigma=10^{-1}$, $N=10^3$, $\lambda=(10^4+\sigma^2)/2$, $K=15$, and let $d$ range from $2$ to $20$ and consider $100$ choices of $\alpha$ spaced evenly in a logarithmic scale from $10$ to $10^{10}$.  
\end{enumerate}

We report in Figure \ref{fig:iso_vs_ani} the result of the experiment in case \ref{item:ani_vs_iso}, and in Figure \ref{fig:dependence_on_d} the results of cases \ref{item:N_vs_d} and \ref{item:alpha_vs_d}.

\subsubsection{Results and discussion}\label{sec:results_and_discussions_higher_dimensions}

In Figure \ref{fig:iso_vs_ani}, which shows the results corresponding to the experiment in case \ref{item:ani_vs_iso}, as observed for instance in \cite{fornasier2022convergence}, we see that the anisotropic case does indeed show a faster convergence rate especially in the initial iterations and for $\lambda$ which is of the order of $\sigma^2$. If $\lambda$ increases, though, we see no significant differences in the convergence behavior of anisotropic or isotropic dynamics, both for the residual and for the variance decrease.

Figure \ref{fig:dependence_on_d} shows the results of the comparisons in cases \ref{item:alpha_vs_d} and \ref{item:N_vs_d}. Regarding case \ref{item:N_vs_d}, we can see that indeed as the dimension increases, we need exponentially many particles (quantified by $N$) to reach high accuracy of order $10^{-9}$. This indicates that Algorithm \ref{alg:discretized_scheme}, while being quite reliable, and sometimes the only available solver for low-dimensional problems, might need further improvement in high-dimensional settings.

Regarding case \ref{item:alpha_vs_d}, Figure \ref{fig:dependence_on_d} (Right) shows that, while increasing $\alpha$ allows reaching a higher accuracy for fixed $d$ as shown in Figure \ref{fig:dependency_on_aln} (Left), the final accuracy also depends on $d$. Specifically, while for $d=2$ it suffices to set $\alpha=10^{3}$ to reach a final accuracy of order $10^{-10}$, for $d=5$ this drastically increases to $10^{5}$. For higher dimensions, further improvement might be needed  to guarantee convergence, though $N$ is set to $10^5$. 
% For higher dimensions, the results might not be reliable due to potential non-converging behaviors, though $N$ is set to $10^5$. 

\begin{figure}[t]
	\centering
	\begin{subfigure}{0.49\linewidth}
		\includegraphics[width=\linewidth]{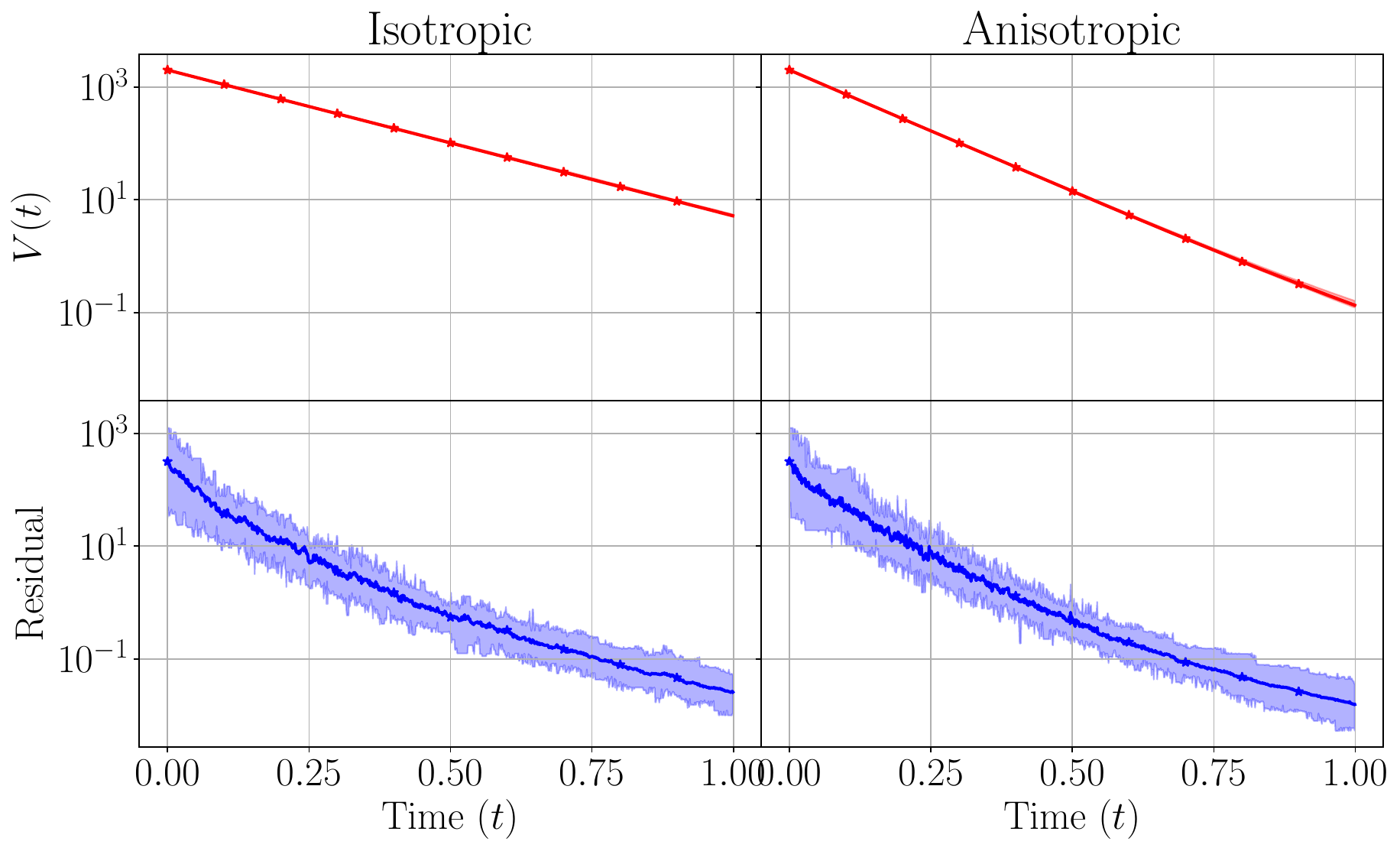}
		\caption{The case $\lambda= 5.5$.}
	\end{subfigure}
	\hfill
	\begin{subfigure}{0.49\linewidth}
		\includegraphics[width=\linewidth]{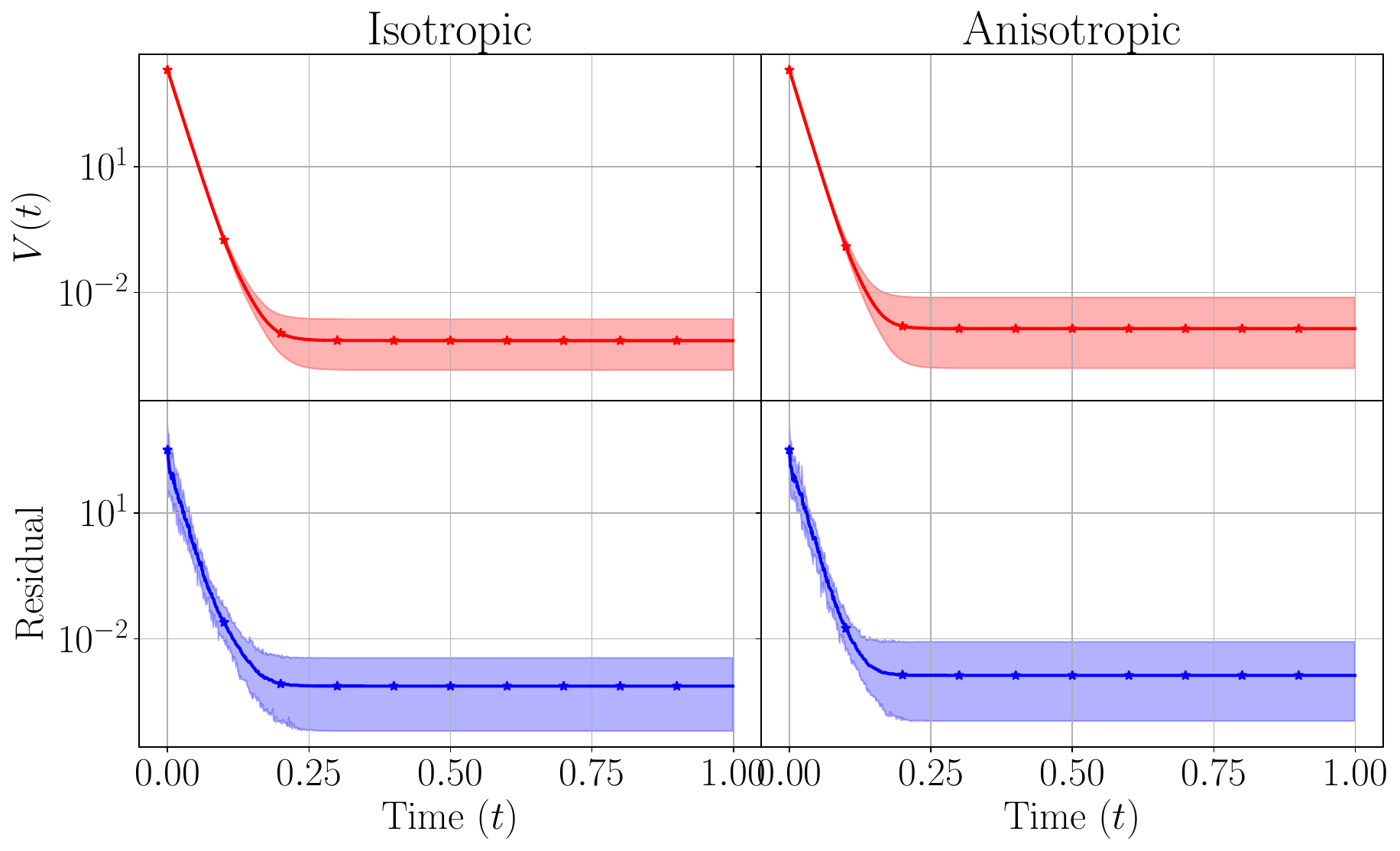}
		\caption{The case $\lambda=50.5$.}
	\end{subfigure}
	\caption{Comparing Variance and Residual (according to \eqref{eq:optimality_conditions}) as a function of time for isotropic and anisotropic dynamics in Algorithm \ref{alg:discretized_scheme} with two different choices of the drift parameter $\lambda$. Confer case \ref{item:ani_vs_iso} for details and to Section \ref{sec:results_and_discussions_higher_dimensions} for further comments.}
	\label{fig:iso_vs_ani}
\end{figure}

\begin{figure}[t]
\centering
	\begin{subfigure}{0.35\linewidth}
		\includegraphics[width=\linewidth]{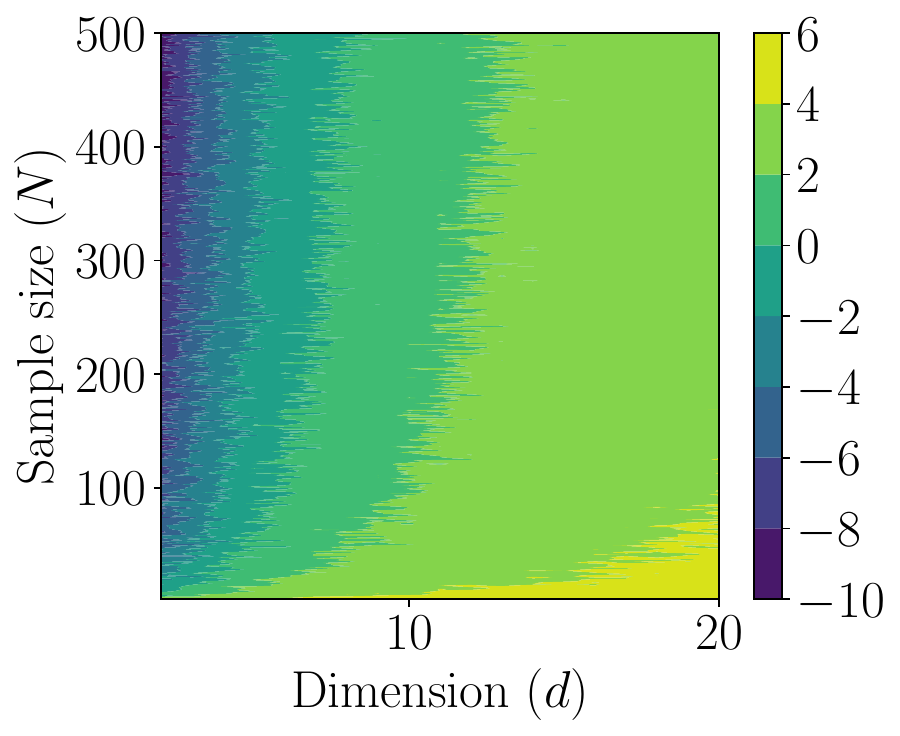}
	\end{subfigure}
	\begin{subfigure}{0.35\linewidth}
		\includegraphics[width=\linewidth]{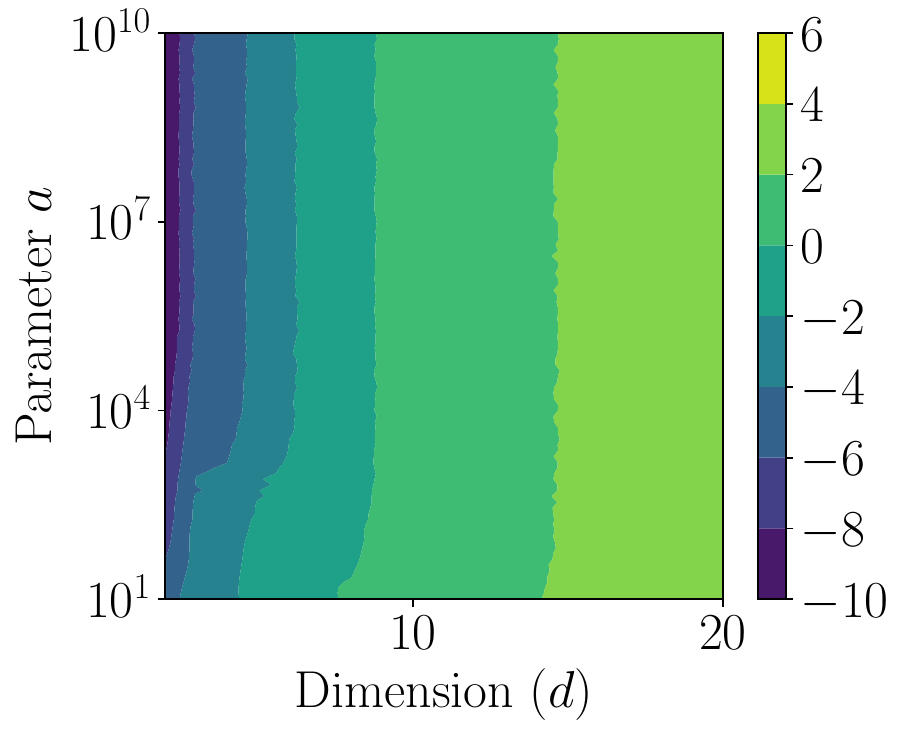}
	\end{subfigure}
	\caption{Logarithm of the variance at the final iteration for different parameter choices across different dimensions. Confer case \ref{item:alpha_vs_d} and \ref{item:N_vs_d} for details and to Section \ref{sec:results_and_discussions_higher_dimensions} for further comments.}
	\label{fig:dependence_on_d}
\end{figure}

\section{Conclusion}
In this paper, we present a novel variant of the consensus-based particle method aimed at identifying global NE within non-convex multiplayer games. We establish the global convergence of the dynamics to the global NE, denoted as $\bx^*=(x_1^*,\dots,x_m^*)$, of the cost functions $\{\TE_m\}_{m\in [M]}$ in mean-field law (large particle limit). Our analysis focuses on the time-evolution of the distance between the mean-field dynamics $\{X_t^m\}_{m\in [M]}$ and the unique global NE $\bx^*$, i.e. $V(t)=\sum_{m=1}^M \EE[|X_t^m-x_m^*|^2]$, demonstrating its exponential decay over time, achieving any prescribed accuracy $\varepsilon$ under suitable assumptions. We illustrate the numerical performance of this method using a one-dimensional example as well as real-world applications rooted in economics, particularly oligopoly games involving multiple goods. These examples confirm that the theoretical predictions are indeed reflected in practice, underscoring the method's competitive performance, particularly when dealing with highly nonlinear and non-convex problems in finite dimensions.
%These examples confirm that the theoretical predictions are indeed reflected in practice, except for Assumption \ref{ass:2}, which we plan to further explore in future works. Apart from the latter technical issue, the method showcased a particularly competitive performance, especially when dealing with highly nonlinear and non-convex problems in low dimensions.

\section*{Acknowledgments} The Department of Mathematics and Scientific Computing at the University of Graz, with which E.C. and H.H. are affiliated, is a member of NAWI Graz (https://nawigraz.at/en). E.C. has received funding from the European Union’s Framework Programme for Research and Innovation Horizon 2020 (2014--2020) under the Marie Skłodowska--Curie Grant Agreement No. 861137. J.Q. is partially supported by the National Science and Engineering Research Council of Canada (NSERC) and by the 2023-2024 PIMS-Europe Fellowship.

%%%%%%%%%%%%%%%%%%%%%%%%%%%%%%%%%%%%%%
\bibliography{multi} 
\bibliographystyle{ieeetr}
%%%%%%%%%%%%%%%%%%%%%%%%%%%%%%%%%%%%%%

\section*{Appendix}

\begin{proof}[Proof of Lemma \ref{lemV}]
	By It\^o--Doeblin formula, we have
	\begin{equation*}
		\begin{aligned}
			d|X_t^m-x_m^*|^2&=2(X_t-x_m^*)\cdot dX_t+\sigma^2|X_t^m-X_\alpha(\rho_t^m)|^2dt\\
			&=-2\lambda(X_t-x_m^*)\cdot (X_t-X_\alpha(\rho^m_t))dt\\
			& \qquad +\sigma(X_t-x_m^*)\cdot D( X_t-X_\alpha(\rho^m_t)) dB_t^m + \sigma^2|X_t^m-X_\alpha(\rho_t^m)|^2dt\,,
		\end{aligned}
	\end{equation*}
	which implies
	\begin{align}
		\frac{d  V^m(t)}{dt}&=-2\lambda\EE[(X_t^m-x_m^*)\cdot(X_t^m-X_\alpha(\rho_t^m))]+\sigma^2\EE[|X_t^m-X_\alpha(\rho_t^m)|^2]\nn\\
		&=-2\lambda V^m(t)-2\lambda\EE[(X_t^m-x_m^*)\cdot(x_m^*-X_\alpha(\rho_t^m))]+\sigma^2\EE[|X_t^m-X_\alpha(\rho_t^m)|^2]\nn\\
		&=-(2\lambda-\sigma^2) V^m(t)+2(\sigma^2-\lambda)\EE[(X_t^m-x_m^*)\cdot(x_m^*-X_\alpha(\rho_t^m))]+\sigma^2|x_m^*-X_\alpha(\rho_t^m)|^2\nn\\
		&\leq -(2\lambda-\sigma^2) V^m(t)+2(\lambda+\sigma^2)V^m(t)^{\frac{1}{2}}|x_m^*-X_\alpha(\rho_t^m)|+\sigma^2 |x_m^*-X_\alpha(\rho_t^m)|^2\,,
	\end{align}
	where, in the first identity, the stochastic integral  $\int \sigma(X_t-x_m^*)\cdot D( X_t-X_\alpha(\rho^m_t)) dB_t^m $ can be easily seen to vanish since $\EE[|X_t^m|^4]<\infty$.%% \begin{equation}
		% \begin{aligned}
			% \EE[|(X_t^m-x_m^*)\cdot(X_t^m-X_\alpha(\rho_t^m))|^2]	&\leq \tfrac{1}{2}\big(\EE[|X_t^m-x_m^*|^4]+\EE[|X_t^m-X_\alpha(\rho_t^m)|^4]\big)\\
			% &\leq C \big(\EE[|X_t^m|^4]+|X_\alpha(\rho_t^m)|^4\big),
			% \end{aligned}
		% \end{equation}
	% where $C$ is a constant depending on $x_m^*$, and 
	% \begin{align}
		% |X_\alpha(\rho_t^m)|^4=\left|\frac{\int_{\RR^{d}}x_m\omega_{\alpha}^{\mc{E}_m}(x_m;\bM_t^{-m})\rho^m(t,dx_m)}{\int_{\RR^{d}}\omega_{\alpha}^{\mc{E}_m}(x_m;\bM_t^{-m})\rho^m(t,dx_m)} \right|^4\leq e^{2\alpha \bar c}\EE[|X_t^m|^4]<\infty\,.
		% \end{align}
	% This completes the proof.
\end{proof}

\begin{proof}[Proof of Proposition \ref{propositive}]
With direct calculations, see, e.g., \cite{fornasier2021consensus1}, we have $\mbox{Image}(\phi_r^{\bar\bx}) = [0, 1]$, $\mbox{supp}(\phi_r) = \boldsymbol{B}_r(\bar \bx)$, $\phi_r^{\bar\bx}\in \mc{C}_c^\infty(\RR^{Md})$ and for all $\bx:=(x_1,\dots x_M) \in \R^{Md}$
\begin{equation}\label{eq:derivatives_of_mollifier}
	\begin{aligned}
		\partial_{(x_m)_k} \phi_{r}^{\bar\bx}(\bx) &=-2 r^{2} \frac{(x_m-\bar{x}_m)_{k}}{\big(r^{2}-(x_m-\bar{x}_m)_{k}^{2}\big)^{2}} \phi_{r}^{\bar\bx}(\bx)\,,\\
		\partial_{(x_m)_k^2}^{2} \phi_{r}^{\bar\bx}(\bx) &=2 r^{2}\bigg(\frac{2\big(2(x_m-\bar{x}_m)_{k}^{2}-r^{2}\big)(x_m-\bar{x}_m)_{k}^{2}-\big(r^{2}-(x_m-\bar{x}_m)_{k}^{2}\big)^{2}}{\big(r^{2}-(x_m-\bar{x}_m)_{k}^{2}\big)^{4}}\bigg) \phi_{r}^{\bar\bx}(\bx)\,.
	\end{aligned}
\end{equation}
	It is easy to compute 
\begin{equation}\label{eq:first_estimate}
	\frac{d \EE[\phi_r^{\bar\bx}(\bX_t)]}{dt}=\sum_{m=1}^{M}\sum_{k=1}^{d}\EE[T_{1,k}^m+T_{2,k}^m]\,,
\end{equation}
where $T_{1,k}^m$ and $T_{2, k}^m$ are random variables defined for all $k \in [d]$ and $m \in [M]$ as
\begin{equation*}
	T_{1,k}^m:=-\lambda(X_t^m-X_\alpha(\rho_t^m))_{k}\partial_{(x_m)_k}\phi_r^{\bar\bx}(\bX_t)\,, \quad \text{and} \quad
	T_{2,k}^m:=\frac{\sigma^2}{2}(X_t^m-X_\alpha(\rho_t^m))_{k}^2\partial_{(x_m)_k^2}^2\phi_r^{\bar\bx}(\bX_t)\,.
\end{equation*}
Let us now define for each $k\in [d]$ and $m\in [M]$ the subsets
\begin{equation}
	\begin{aligned}
		K_{1,k}^m&:=\{\bx\in \R^{Md}:\,|(x_m-\bar{x}_m)_k|>\sqrt{c}r\}\,,\\
		K_{2,k}^m&:=\bigg\{\bx \in \mathbb{R}^{Md}:-\lambda(x_m-X_{\alpha}(\rho_{t}^m)_{k}(x_m-\bar{x}_m)_{k}(r^{2}-(x_m-\bar{x}_m)_{k}^{2})^{2} \\
		& \hspace{3cm} >\tilde{c} r^{2} \frac{\sigma^{2}}{2}(x_m-X_{\alpha}(\rho_{t}^m))_{k}^{2}(x_m-\bar{x}_m)_{k}^{2}
		\bigg\}\,,
	\end{aligned}
\end{equation}
where $\tilde c:=2c-1 $ with $(\sqrt{5}-1)/2\leq c<1$. For fixed $k$ and $m$ we decompose $\boldsymbol{B}_r(\bx^*)$ according to
\begin{equation}
	\boldsymbol{B}_r(\bx^*)=\underbrace{\left((K_{1,k}^m)^{c} \cap \boldsymbol{B}_r(\bx^*)\right)}_{=: \Omega_1} \cup\underbrace{\left(K_{1,k}^m \cap (K_{2,k}^m)^{c} \cap \boldsymbol{B}_r(\bx^*)\right)}_{=: \Omega_2} \cup\underbrace{\left(K_{1,k}^m \cap K_{2,k}^m \cap \boldsymbol{B}_r(\bx^*)\right)}_{=: \Omega_3}\,.
\end{equation}
Then, in the following we consider each of the cases $\bX_t\in\Omega_1$, $\bX_t\in\Omega_2$ and $\bX_t\in \Omega_3$ separately. In the following, to simplify notation, we denote by $\phi_r^{\bar\bx}=\phi_r^{\bar\bx}(\bX_t)$ whenever this does not create confusion.

\textbf{Subset} $\Omega_1:$ For each $\bX_t\in (K_{1,k}^m)^c$, we have that $|(X_t^m-\bar x_m)|\leq \sqrt{c}r$ holds. To estimate $T_{1,k}^m$, we use the expression of $\partial_{x_k}\phi_r^{\bar\bx}$ in \eqref{eq:derivatives_of_mollifier} and get
\begin{align*}
	T_{1,k}^m &=2 r^{2} \lambda(X_t^m-X_{\alpha}\left(\rho_{t}^m\right))_{k} \frac{(X_t^m-\bar{x}_m)_{k}}{\big(r^{2}-(X_t^m-\bar{x}_m)_{k}^{2}\big)^{2}} \phi_{r}^{\bar\bx} \\
	& \geq-2 r^{2} \lambda \frac{ |(X_t^m-X_{\alpha}(\rho_{t}^m))_{k}| |(X_t^m-\bar{x}_m)_{k}|}{(r^{2}-(X_t^m-\bar{x}_m)_{k}^{2})^{2}} \phi_{r}^{\bar\bx} \geq-\frac{2 \lambda(\sqrt{c} r+B) \sqrt{c}}{(1-c)^{2} r} \phi_{r}^{\bar\bx} =:-q_{1}^m \phi_{r}^{\bar\bx}\,,
\end{align*}
where in the last inequality we have used 
\begin{align*}
	&|(X_t^m-X_{\alpha}\left(\rho_{t}^m\right))_{k}| \leq |(X_t^m-\bar{x}_m)_{k}|+ |\left(\bar{x}_m-X_{\alpha}\left(\rho_{t}^m\right)\right)_{k} | \\
	\leq & \sqrt{c} r+   |\left(\bar{x}_m-x_m^*\right)_k|+|\left(x_m^*-X_{\alpha}\left(\rho_{t}^m\right)\right)_{k} |\leq \sqrt{c} r+\barc_2+|x_m^*| +B\\
	=:&\sqrt{c} r+\mc B\,.
\end{align*}
Similarly for $T_{2,k}^m$ one obtains
\begin{align*}
	T_{2,k}^m &=\sigma^{2} r^{2}(X_t^m-X_{\alpha}(\rho_{t}^m))_{k}^{2} \frac{2 (2 (X_t^m-\bar{x}_m)_{k}^{2}-r^{2})(X_t^m-\bar{x}_m)_{k}^{2}-(r^{2}-(X_t^m-\bar{x}_m)_{k}^{2})^{2}}{(r^{2}-(X_t^m-\bar{x}_m)_{k}^{2})^{4}} \phi_{r}^{\bar\bx} \\
	& \geq-\frac{2 \sigma^{2}(c r^{2}+\mc B^{2})(2c^2-c+1)}{(1-c)^{4} r^{2}} \phi_{r}^{\bar\bx}=:-q_{2}^m \phi_{r}^{\bar\bx}\,.
\end{align*}

\textbf{Subset} $\Omega_2:$  As  $\bX_t\in K_{1,k}^m$ we have $|(X_t^m-\bar{x}_m)_k|>\sqrt{c}r$. We observe that $T_{1,k}^m+T_{2,k}^m\geq 0$ for all $X_t^m$ satisfying
\begin{align}\label{positve}
	&\bigg(-\lambda (X_t^m-X_{\alpha} (\rho_{t}^m ) )_{k} (X_t^m-\bar{x}_m )_{k}+\frac{\sigma^{2}}{2} (X_t^m-X_{\alpha} (\rho_{t}^m ) )_{k}^{2}\bigg) (r^{2}- (X_t^m-\bar{x}_m )_{k}^{2} )^{2} \nn\\
	&\hspace{1cm}	\leq \sigma^{2} (X_t^m-X_{\alpha} (\rho_{t}^m ) )_{k}^{2} (2 (X_t^m-\bar{x}_m )_{k}^{2}-r^{2} ) (X_t^m-\bar{x}_m )_{k}^{2}\,.
\end{align}
Actually, this can be verified by first showing that
{\small \begin{align*}
	&-\lambda (X_t^m-X_{\alpha} (\rho_{t}^m ) )_{k} (X_t^m-\bar{x}_m )_{k} (r^{2}- (X_t^m-\bar{x}_m )_{k}^{2} )^{2}\leq \tilde{c} r^{2} \frac{\sigma^{2}}{2} (X_t^m-X_{\alpha} (\rho_{t}^m ) )_{k}^{2} (X_t^m-\bar{x}_m )_{k}^{2}\\
	&=(2 c-1) r^{2} \frac{\sigma^{2}}{2} (X_t^m-X_{\alpha} (\rho_{t}^m ) )_{k}^{2} (X_t^m-\bar{x}_m )_{k}^{2} \leq (2 (X_t^m-\bar{x}_m )_{k}^{2}-r^{2} ) \frac{\sigma^{2}}{2} (X_t^m-X_{\alpha} (\rho_{t}^m ) )_{k}^{2} (X_t^m-\bar{x}_m )_{k}^{2}\,,
\end{align*}}
where we have used the fact that $\bX_t\in K_{1,k}^m\cap( K_{2,k}^m)^c$ and $\tilde c=2c-1$. One may also notice that
\begin{align*}
	\frac{\sigma^{2}}{2} & (X_t^m-X_{\alpha} (\rho_{t}^m ) )_{k}^{2} (r^{2}- (X_t^m-\bar{x}_m )_{k}^{2} )^{2} \leq \frac{\sigma^{2}}{2} (X_t^m-X_{\alpha} (\rho_{t}^m ) )_{k}^{2}(1-c)^{2} r^{4} \\
	& \leq \frac{\sigma^{2}}{2} (X_t^m-X_{\alpha} (\rho_{t}^m ) )_{k}^{2}(2 c-1) r^{2} c r^{2}\\
	&\leq \frac{\sigma^{2}}{2} (X_t^m-X_{\alpha} (\rho_{t}^m ) )_{k}^{2} (2 (X_t^m-\bar{x}_m )_{k}^{2}-r^{2} ) (X_t^m-\bar{x}_m )_{k}^{2}\,,
\end{align*}
by using $(1-c)^2\leq(2c-1)c$, namely $\frac{\sqrt{5}-1}{2}\leq c<1$. Hence \eqref{positve} holds and we have $T_{1,k}^m+T_{2,k}^m\geq 0$.

\textbf{Subset} $\Omega_3:$ Notice that when $(X_t^m)_k=(X_\alpha(\rho_t^m))_k$, we have $T_{1,k}^m=T_{2,k}^m=0$, so in this case there is nothing to prove. If $(X_t^m)_k\neq(X_\alpha(\rho_t^m))_k$, or $\sigma^2(X_t-X_\alpha(\rho_t^m))_k^2>0$ $(\sigma>0)$, we exploit
$\bX_t\in K_{2,k}^m$ to get
\begin{align*}
	\frac{ (X_t^m-X_{\alpha} (\rho_{t}^m ) )_{k} (X_t^m-\bar{x}_m )_{k}}{ (r^{2}- (X_t^m-\bar{x}_m )_{k}^{2} )^{2}} & \geq \frac{- | (X_t^m-X_{\alpha} (\rho_{t}^m ) )_{k} | | (X_t^m-\bar{x}_m )_{k} |}{ (r^{2}- (X_t^m-\bar{x}_m )_{k}^{2} )^{2}} \\
	&>\frac{2 \lambda (X_t^m-X_{\alpha} (\rho_{t}^m ) )_{k} (X_t^m-\bar{x}_m )_{k}}{\tilde{c} r^{2} \sigma^{2} | (X_t^m-X_{\alpha} (\rho_{t}^m ) )_{k} | | (X_t^m-\bar{x}_m )_{k} |} \geq-\frac{2 \lambda}{\tilde{c} r^{2} \sigma^{2}} \,.
\end{align*}
Using this, $T_{1,k}^m$ can be bounded from below
\begin{equation}
	T_{1,k}^m=2 r^{2} \lambda (X_t^m-X_{\alpha} (\rho_{t}^m ) )_{k} \frac{ (X_t^m-\bar{x}_m )_{k}}{ (r^{2}- (X_t^m-\bar{x}_m )_{k}^{2} )^{2}} \phi_{r}^{\bar\bx}\geq-\frac{4 \lambda^{2}}{\tilde{c} \sigma^{2}} \phi_{r}^{\bar\bx}=:-q_{3}^m \phi_{r}^{\bar\bx}\,.
\end{equation}
Moreover since $\bX_t\in K_{1,k}^m$ and $2(2c-1)c\geq(1-c)^2$ implied by the assumption, one has
\begin{equation}
	2 (2 (X_t^m-\bar{x}_m )_{k}^{2}-r^{2} ) (X_t^m-\bar{x}_m )_{k}^{2} \geq (r^{2}- (X_t^m-\bar{x}_m )_{k}^{2} )^{2}\,,
\end{equation}
which yields that $T_{2,k}^m\geq 0$. 

\textbf{Concluding the proof:} Collecting estimates from above, from \eqref{eq:first_estimate} we get
\begin{equation*}
	\begin{aligned}
		\frac{d \EE[\phi_r^{\bar\bx}(\bX_t)]}{dt}&=\sum_{m=1}^{M}\sum_{k=1}^{d}\EE[T_{1,k}^m+T_{2,k}^m]\nn\\
		&=\sum_{m=1}^{M}\sum_{k=1}^{d}\bigg(\EE[(T_{1,k}^m+T_{2,k}^m)\textbf{I}_{\Omega_1}]+\EE[(T_{1,k}^m+T_{2,k}^m)\textbf{I}_{\Omega_2}]+\EE[(T_{1,k}^m+T_{2,k}^m)\textbf{I}_{\Omega_3}]\bigg)\nn\\
		&\geq-d\sum_{m=1}^M (q_1^m+q_2^m+q_3^m)\EE[\phi_r^{\bar\bx}(\bX_t)]=-\vartheta \EE[\phi_r^{\bar\bx}(\bX_t)]\,,
	\end{aligned}
\end{equation*}
and an application of Gronwall's inequality and taking infimum on $\bar\bx$ yield \eqref{eq:propositive}.
\end{proof}

\end{document}